\theoremstyle{plain}
\newtheorem{thm}{Theorem}[section]
\newtheorem{cor}[thm]{Corollary}
\newtheorem{lem}[thm]{Lemma}
\newtheorem{prop}[thm]{Proposition}
\newtheorem{defn}[thm]{Definition}
\newtheorem{exa}[thm]{Example}
\newtheorem{question}[thm]{Question}
\begin{document}

\title{Graded Weakly $2$-Absorbing Ideals over Non-Commutative Rings}

\author{Azzh Saad \textsc{Alshehry}}
\address{Department of Mathematical Sciences, Faculty of Sciences, Princess Nourah Bint Abdulrahman University, P.O. Box 84428, Riyadh 11671, Saudi Arabia}
\email{asalshihry@pnu.edu.sa}

\author{Jebrel \textsc{Habeb}}
\address{Department of Mathematics, Yarmouk University, Irbid, Jordan}
\email{jhabeb@yu.edu.jo}

\author{Rashid \textsc{Abu-Dawwas}}
\address{Department of Mathematics, Yarmouk University, Irbid, Jordan}
\email{rrashid@yu.edu.jo}

\author{Ahmad \textsc{Alrawabdeh}}
\address{Department of Mathematics, Yarmouk University, Irbid, Jordan}
\email{ahmad.rwabdeh@gmail.com}

\subjclass[2010]{Primary 13A02; Secondary 16W50}

\keywords{Graded prime ideals; graded weakly prime ideals; graded $2$-absorbing ideals; graded weakly $2$-absorbing ideals.}

\begin{abstract}
For commutative graded rings, the concept of graded $2$-absorbing (graded weakly $2$-absorbing) ideals was introduced and examined by Al-Zoubi, Abu-Dawwas and \c{C}eken (Hacettepe Journal of Mathematics and Statistics, 48 (3) (2019), 724-731) as a generalization of the concept of graded prime (graded weakly prime) ideals. Up to now, research on these topics mainly concentrated on commutative graded rings. On the other hand, graded prime ideals over non-commutative graded rings have been introduced and examined by Abu-Dawwas, Bataineh and Al-Muanger (Vietnam Journal of Mathematics, 46 (3) (2018), 681-692). As a generalization of graded prime ideals over non-commutative graded rings, the concept of graded $2$-absorbing ideals over non-commutative graded rings has been introduced and investigated by Abu-Dawwas, Shashan and Dagher (WSEAS Transactions on Mathematics, 19 (2020), 232-238). Recently, graded weakly prime ideals over non-commutative graded rings have been introduced and studied by Alshehry and Abu-Dawwas (Communications in Algebra, 49 (11) (2021), 4712-4723). In this article, we introduce and study the concept of graded weakly $2$-absorbing ideals as a generalization of graded weakly prime ideals in a non-commutative graded ring, and show that many of the results in commutative graded rings also hold in non-commutative graded rings.
\end{abstract}

\maketitle

\section{Introduction}

Throughout this article, the rings are associative but not necessarily assumed to have unity unless indicated otherwise. Also an ideal means a two-sided ideal. Let $G$ be a group with identity $e$ and $R$ be a ring. Then $R$ is called $G$-graded if $R=\displaystyle\bigoplus_{g\in G} R_{g}$ with $R_{g}R_{h}\subseteq R_{gh}$ for all $g, h\in G$, where $R_{g}$ is an additive subgroup of $R$ for all $g\in G$. The elements of $R_{g}$ are called homogeneous of degree $g$. If $a\in R$, then $a$ can be written uniquely as $\displaystyle\sum_{g\in G}a_{g}$, where $a_{g}$ is the component of $a$ in $R_{g}$. The component $R_{e}$ is a subring of $R$ and if $R$ has a unity $1$, then $1\in R_{e}$. The set of all homogeneous elements of $R$ is $h(R)=\displaystyle\bigcup_{g\in G}R_{g}$. Let $P$ be an ideal of a graded ring $R$. Then $P$ is called a graded ideal if $P=\displaystyle\bigoplus_{g\in G}(P\cap R_{g})$, i.e., for $a\in P$, $a=\displaystyle\sum_{g\in G}a_{g}$ where $a_{g}\in P$ for all $g\in G$. It is not necessary that every ideal of a graded ring is a graded ideal (\cite{Alshehry Dawwas}, Example 1.1). For more details and terminology, see \cite{Hazart, Nastasescue}.

For commutative graded rings, graded $2$-absorbing ideals, which are a generalization of graded prime ideals, were introduced and investigated in \cite{Zoubi Dawwas Ceken}. Recall from \cite{Atani} that a proper graded ideal $P$ of a commutative graded ring $R$ is said to be a graded weakly prime ideal of $R$ if whenever $x, y\in h(R)$ and $0\neq xy\in P$, then either $x\in P$ or $y\in P$. Also from \cite{Zoubi Dawwas Ceken} a proper graded ideal $P$ of a commutative
graded ring $R$ is called a graded $2$-absorbing ideal of $R$ if whenever $x, y, z\in h(R)$ and $xyz\in P$, then either $xy\in P$ or $xz\in P$ or $yz\in P$. The concept of a graded weakly $2$-absorbing ideal of a commutative graded ring $R$ was introduced in \cite{Zoubi Dawwas Ceken}. A proper graded ideal $P$ of a commutative graded ring $R$ is called a graded weakly $2$-absorbing ideal of $R$ if whenever $x, y, z\in h(R)$ and $0\neq xyz\in P$, then either $xy\in P$ or $xz\in P$ or $yz\in P$.

Graded prime ideals over non-commutative graded rings have been introduced and examined by Abu-Dawwas, Bataineh and Al-Muanger in \cite{Dawwas Bataineh Muanger}. A proper graded ideal $P$ of $R$ is said to be graded prime if whenever $I$ and $J$ are graded ideals of $R$ such that $IJ\subseteq P$, then $I\subseteq P$ or $J\subseteq P$. As a generalization of graded prime ideals over non-commutative graded rings, the concept of graded $2$-absorbing ideals over non-commutative graded rings has been introduced and investigated by Abu-Dawwas, Shashan and Dagher in \cite{Dawwas Shashan Dagher}. A proper graded ideal $P$ of $R$ is said to be graded $2$-absorbing if whenever $x, y, z\in h(R)$ such that $xRyRz\subseteq P$, then $xy\in P$ or $yz\in P$ or $xz\in P$. Recently, graded weakly prime ideals over non-commutative graded rings have been introduced and studied by Alshehry and Abu-Dawwas in \cite{Alshehry Dawwas}. A proper graded ideal $P$ of $R$ is said to be graded weakly prime if whenever $I$ and $J$ are graded ideals of $R$ such that $0\neq IJ\subseteq P$, then $I\subseteq P$ or $J\subseteq P$. In this article, we are following \cite{Groenewald} to introduce and study the concept of graded weakly $2$-absorbing ideals as a generalization of graded weakly prime ideals in a non-commutative graded ring, and show that many of the results in commutative graded rings also hold in non-commutative graded rings.

\section{Graded Weakly $2$-Absorbing Ideals}

In this section, we introduce and study the concept of graded weakly $2$-absorbing ideals. In what follows a ring $R$ is a non-commutative ring with unity unless indicated otherwise.

\begin{defn} Let $P$ be a proper graded ideal of a graded ring $R$. Then $P$ is said to be a graded weakly $2$-absorbing ideal of $R$ if $0\neq xRyRz\subseteq P$ implies $xy\in P$ or $yz\in P$ or $xz\in P$ for all $x, y, z\in h(R)$. If $0\neq xyz\in P$ implies $xy\in P$ or $yz\in P$ or $xz\in P$ for all $x, y, z\in h(R)$, we say that $P$ is graded completely weakly $2$-absorbing.
\end{defn}

Apparently, if $R$ is a commutative graded ring with unity, then the two concepts of graded weakly $2$-absorbing and graded completely weakly $2$-absorbing
coincide. The next example shows that this will not be the case for non-commutative graded rings:

\begin{exa}\label{Remark 4.6} Consider $R=M_{2}(\mathbb{Z})$ (the ring of all $2\times 2$ matrices with integer entries) and $G=\mathbb{Z}_{4}$. Then $R$ is $G$-graded by $R_{0}=\left(\begin{array}{cc}
                             \mathbb{Z} & 0 \\
                             0 & \mathbb{Z}
                           \end{array}\right)$, $R_{2}=\left(\begin{array}{cc}
                             0 & \mathbb{Z} \\
                             \mathbb{Z} & 0
                           \end{array}\right)$ and $R_{1}=R_{3}=0$. Consider the graded ideal $P=M_{2}(2\mathbb{Z})$ of $R$. Clearly, $P$ is a graded prime ideal of $R$ and hence also a graded weakly $2$-absorbing ideal of $R$. On the other hand, $P$ is not a graded completely weakly $2$-absorbing ideal of $R$ since $A=\left(\begin{array}{cc}
                             3 & 0 \\
                             0 & 2
                           \end{array}\right)$, $B=\left(\begin{array}{cc}
                             0 & 3 \\
                             5 & 0
                           \end{array}\right)$ and $C=\left(\begin{array}{cc}
                             7 & 0 \\
                             0 & 4
                           \end{array}\right)\in h(R)$ with $0\neq ABC=\left(\begin{array}{cc}
                             0 & 36 \\
                             70 & 0
                           \end{array}\right)\in P$, $AB=\left(\begin{array}{cc}
                             0 & 9 \\
                             10 & 0
                           \end{array}\right)\notin P$, $AC=\left(\begin{array}{cc}
                             21 & 0 \\
                             0 & 8
                           \end{array}\right)\notin P$ and $BC=\left(\begin{array}{cc}
                             0 & 12 \\
                             35 & 0
                           \end{array}\right)\notin P$.
\end{exa}

Undoubtedly, every graded $2$-absorbing ideal of a graded ring $R$ is a graded weakly $2$-absorbing ideal of $R$. If $R$ is any graded ring, then $P=\{0\}$ is a graded weakly $2$-absorbing ideal of $R$ by definition. On the other hand, $P=\{0\}$ is not necessary to be a graded $2$-absorbing ideal, check the following example:

\begin{exa} Consider $R=\mathbb{Z}_{8}[i]$ and $G=\mathbb{Z}_{2}$. Then $R$ is $G$-graded by $R_{0}=\mathbb{Z}_{8}$ and $R_{1}=i\mathbb{Z}_{8}$. Undeniably, $P=\{0\}$ is not a graded $2$-absorbing ideal of $R$ since $2\in h(R)$ with $2R2R2\subseteq P$ but $2.2\notin P$.
\end{exa}

\begin{lem}\label{1} Let $P$ be a graded ideal in a graded ring $R$. Assume that $P$ is a graded weakly prime ideal of $R$. If $I$ and $J$ are graded right (left) ideals of $R$ such that $0\neq IJ\subseteq P$, then $I\subseteq P$ or $J\subseteq P$.
\end{lem}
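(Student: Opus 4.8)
The plan is to deduce this one-sided statement from the definition of \emph{graded weakly prime} (which is phrased only for two-sided graded ideals) by enlarging $I$ and $J$ to suitable two-sided graded ideals. I will treat the ``right ideal'' case; the ``left ideal'' case is entirely symmetric.

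So suppose $I$ and $J$ are graded right ideals of $R$ with $0\neq IJ\subseteq P$. Since $R$ has unity, I would set $I^{*}=RI$ and $J^{*}=RJ$. First I would record that $I^{*}$ and $J^{*}$ are graded two-sided ideals of $R$ containing $I$ and $J$ respectively: they are graded because $R$ and $I$ (resp. $J$) are graded, so every product $R_{g}I_{h}\subseteq R_{gh}$; they are clearly left ideals; and they are right ideals because $I^{*}R=R(IR)\subseteq RI=I^{*}$, using $IR\subseteq I$ (as $I$ is a right ideal), and similarly for $J^{*}$; finally $1\in R$ gives $I=1\cdot I\subseteq RI=I^{*}$ and likewise $J\subseteq J^{*}$.

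The key step is then the computation $I^{*}J^{*}=RIRJ=R(IR)J\subseteq RIJ=R(IJ)\subseteq RP\subseteq P$, where I use $IR\subseteq I$ (right ideal), $IJ\subseteq P$, and that $P$ is a two-sided ideal. On the other hand, $I^{*}J^{*}\supseteq IJ\neq 0$. Hence $0\neq I^{*}J^{*}\subseteq P$, and since $P$ is graded weakly prime, $I^{*}\subseteq P$ or $J^{*}\subseteq P$, which gives $I\subseteq P$ or $J\subseteq P$ as required. For the left-ideal case, one instead takes $I^{*}=IR$ and $J^{*}=JR$; these are graded two-sided ideals containing $I,J$, and $I^{*}J^{*}=IRJR=I(RJ)R\subseteq IJR=(IJ)R\subseteq P$ using $RJ\subseteq J$ and the two-sidedness of $P$, while $I^{*}J^{*}\supseteq IJ\neq 0$, so the same appeal to graded weak primeness finishes the argument.

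The only point that really needs care is the middle inclusion in $I^{*}J^{*}\subseteq P$: this is exactly where the one-sided hypothesis ($IR\subseteq I$, resp. $RJ\subseteq J$) enters, together with the fact that $P$ is two-sided — and one must simultaneously note that the enlarged product $I^{*}J^{*}$ is still nonzero (because it contains $IJ$) so that the definition of graded weakly prime can be invoked. Everything else — gradedness of $RI$ and $RJ$, and the inclusions $I\subseteq RI$, $J\subseteq RJ$ — is routine and uses only that $R$ has unity.
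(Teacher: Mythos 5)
Your proof is correct and follows essentially the same route as the paper: replace $I$ and $J$ by the two-sided graded ideals they generate (for a right ideal $I$ with unity present this is exactly $RI$, which is what the paper denotes $(I)$), check that the enlarged product is still nonzero and contained in $P$, and apply the definition of graded weakly prime. You have simply written out explicitly the verifications ($I^{*}J^{*}\subseteq P$ via the one-sided hypothesis, and $I^{*}J^{*}\supseteq IJ\neq 0$) that the paper leaves implicit.
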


\begin{proof} Suppose that $I$ and $J$ are graded right (left) ideals of $R$ such that $0\neq IJ\subseteq P$. Let $(I)$ and $(J)$ be the graded ideals generated by $I$ and $J$ respectively. Then $0\neq(I)(J)\subseteq P$, whence $I\subseteq(I)\subseteq P$ or $J\subseteq(J)\subseteq P$.
\end{proof}

\begin{lem}\label{Lemma 1.2} Let $P$ be a graded ideal in a graded ring $R$. If $P$ is a graded weakly prime ideal of $R$ and $0\neq xRyRz\subseteq P$ for $x, y, z\in h(R)$, then $x\in P$ or $y\in P$ or $z\in P$.
\end{lem}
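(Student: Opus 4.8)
The plan is to translate the hypothesis, which is phrased in terms of the homogeneous elements $x, y, z$, into a statement about graded ideals, so that the graded weakly prime condition can be invoked. Since $R$ has a unity, for any homogeneous element $w$ the graded ideal generated by $w$ is $RwR$, and $w = 1 \cdot w \cdot 1 \in RwR$. I would set $A = RxR$, $B = RyR$ and $C = RzR$; these are graded ideals of $R$, and hence so are the products $AB = RxRyR$ and $ABC = RxRyRzR$ (recall that a product of graded ideals is again a graded ideal).

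First I would check that $ABC \subseteq P$. A typical generator of $ABC = RxRyRzR$ has the form $r_1 x r_2 y r_3 z r_4$ with $r_i \in R$, and here $x r_2 y r_3 z \in xRyRz \subseteq P$; since $P$ is a two-sided ideal, $r_1 (x r_2 y r_3 z) r_4 \in P$, so $ABC \subseteq P$. Next I would verify that $ABC \neq 0$: because $xRyRz \neq 0$ there are $s, t \in R$ with $x s y t z \neq 0$, and then $x s y t z = 1 \cdot x \cdot s \cdot y \cdot t \cdot z \cdot 1 \in RxRyRzR = ABC$, so $ABC \neq 0$. In particular, writing $ABC = (AB)C$, neither the graded ideal $AB$ nor the graded ideal $C$ can be zero.

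Now I would apply the graded weakly prime hypothesis twice. Applied to $0 \neq (AB)C \subseteq P$ it gives $C \subseteq P$ or $AB \subseteq P$. In the first case $z \in RzR = C \subseteq P$ and we are done. In the second case we have $0 \neq AB = A \cdot B \subseteq P$ (nonzero as noted above), and applying graded weakly primeness once more yields $A \subseteq P$ or $B \subseteq P$, that is, $x \in RxR = A \subseteq P$ or $y \in RyR = B \subseteq P$. In all cases $x \in P$ or $y \in P$ or $z \in P$, as required.

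The argument is a two-step iteration of the weakly prime condition, so there is no serious obstacle; the only point requiring care is the bookkeeping of non-vanishing, namely establishing $ABC \neq 0$ (and consequently $AB \neq 0$ and $C \neq 0$) so that each invocation of the graded weakly prime property is legitimate. This is precisely where the hypothesis $xRyRz \neq 0$, rather than merely $xRyRz \subseteq P$, is used.
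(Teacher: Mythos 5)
Your proof is correct. It differs from the paper's route in one organizational respect: the paper first proves an auxiliary result (Lemma \ref{1}) extending the graded weakly prime condition from two-sided graded ideals to one-sided ones, and then factors $0\neq (Rx)(RyRz)\subseteq P$ using the \emph{left} ideals $Rx$, $Ry$, $Rz$, iterating that lemma. You instead pass to the two-sided principal graded ideals $A=RxR$, $B=RyR$, $C=RzR$ and apply the definition of graded weakly prime directly to $0\neq(AB)C\subseteq P$ and then to $0\neq AB\subseteq P$; this bypasses Lemma \ref{1} entirely, at the small cost of checking $ABC\subseteq P$ and $ABC\neq 0$ by hand (both of which you do correctly, using the unity of $R$ to get $xRyRz\subseteq RxRyRzR$ and $w\in RwR$). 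Your bookkeeping of the non-vanishing hypotheses --- deducing $AB\neq 0$ from $ABC\neq 0$ before the second application --- is exactly the point the paper glosses over with ``by reiterating this,'' so your version is, if anything, slightly more complete. Both arguments are the same in spirit: promote homogeneous elements to graded ideals and iterate the weakly prime condition twice.
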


\begin{proof} Suppose that $x, y, z\in h(R)$ such that $0\neq xRyRz\subseteq P$. Then $0\neq (Rx)RyRz\subseteq P$ and it follows from Lemma \ref{1} that $x\in Rx\subseteq P$ or $0\neq RyRz\subseteq P$. By reiterating this, the result follows.
\end{proof}

\begin{prop}\label{Proposition 4.2} Let R be a graded ring and $P$ be a proper graded ideal of $R$. If $P$ is a graded weakly prime ideal of $R$,
then $P$ is a graded weakly $2$-absorbing ideal of $R$.
\end{prop}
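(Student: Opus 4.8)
The plan is to reduce the statement immediately to Lemma \ref{Lemma 1.2}. Let $x, y, z \in h(R)$ satisfy $0 \neq xRyRz \subseteq P$; the goal is to exhibit one of the products $xy$, $yz$, $xz$ inside $P$. Since $P$ is a graded weakly prime ideal of $R$, Lemma \ref{Lemma 1.2} applies verbatim to the hypothesis $0 \neq xRyRz \subseteq P$ and yields that $x \in P$, or $y \in P$, or $z \in P$.

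From here a short case analysis closes the argument, using only that $P$ is a two-sided ideal. If $x \in P$, then $xy \in P$ (in fact both $xy$ and $xz$ lie in $P$). If $y \in P$, then again $xy \in P$ (as well as $yz \in P$). If $z \in P$, then $yz \in P$ (as well as $xz \in P$). Thus in every case at least one of $xy$, $yz$, $xz$ belongs to $P$, which is precisely the defining condition for $P$ to be graded weakly $2$-absorbing; and $P$ is proper by hypothesis, so the conclusion follows.

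I expect no genuine obstacle: essentially all of the content is carried by Lemma \ref{Lemma 1.2}, and the remaining step is the elementary remark that membership of a single factor in a two-sided ideal forces membership of any product containing that factor. The only points that require a little care are keeping the homogeneity hypotheses $x, y, z \in h(R)$ in place so that Lemma \ref{Lemma 1.2} is legitimately applicable, and writing the case split so that it transparently covers all three required products $xy$, $yz$, $xz$.
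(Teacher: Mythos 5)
Your proposal is correct and follows exactly the paper's own argument: apply Lemma \ref{Lemma 1.2} to get $x\in P$ or $y\in P$ or $z\in P$, then observe that membership of one factor in the two-sided ideal $P$ forces one of $xy$, $yz$, $xz$ into $P$. The only difference is that you spell out the final case analysis, which the paper compresses into the word ``Accordingly.''
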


\begin{proof} Let $x, y, z\in h(R)$ such that $0\neq xRyRz\subseteq P$. By Lemma \ref{Lemma 1.2}, we have $x\in P$ or $y\in P$ or $z\in P$. Accordingly, $xy\in P$ or $yz\in P$ or $xz\in P$, and the result holds.
\end{proof}

\begin{prop}\label{Corollary 4.4} If $P$ and $K$ are two distinct graded weakly prime ideals of a graded ring $R$, then $P\bigcap K$ is a graded weakly $2$-absorbing ideal of $R$.
\end{prop}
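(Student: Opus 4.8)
The plan is to verify the definition of a graded weakly $2$-absorbing ideal directly, letting Lemma~\ref{Lemma 1.2} carry the weight. First I would dispose of the routine structural facts: $P\cap K$ is a graded ideal, since $P$ and $K$ are graded and hence every homogeneous component of an element of $P\cap K$ lies in $P$ and in $K$, so in $P\cap K$; and $P\cap K$ is proper because $P\cap K\subseteq P$ and $P$ is proper. Thus it only remains to establish the absorbing property.

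So let $x,y,z\in h(R)$ satisfy $0\neq xRyRz\subseteq P\cap K$. Since $0\neq xRyRz\subseteq P$ and $P$ is a graded weakly prime ideal, Lemma~\ref{Lemma 1.2} yields that one of $x,y,z$ lies in $P$; fix such a variable and call it $a$. Applying Lemma~\ref{Lemma 1.2} with $K$ in place of $P$ (using $0\neq xRyRz\subseteq K$) likewise produces a variable $b\in\{x,y,z\}$ with $b\in K$.

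The remaining step is a brief case analysis that uses nothing beyond the fact that $P$, $K$, and therefore $P\cap K$ are two-sided ideals and so absorb multiplication on both sides. If $a=b$, then $a\in P\cap K$, and any product of $a$ with one of the other two variables, in either order, lies in $P\cap K$; in particular at least one of $xy$, $yz$, $xz$ does. If $a\neq b$, then $\{a,b\}$ is one of the unordered pairs $\{x,y\}$, $\{y,z\}$, $\{x,z\}$, so the corresponding element of $\{xy,yz,xz\}$ has $a$ as a factor, hence lies in $P$, and has $b$ as a factor, hence lies in $K$, and therefore lies in $P\cap K$. In every case the defining condition holds, so $P\cap K$ is graded weakly $2$-absorbing.

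I do not expect a genuine obstacle; the single point that wants care is that the definition records the outputs in the fixed orders $xy,yz,xz$, so one must check that, whichever variable lands in $P$ and whichever lands in $K$, the pertinent two-variable product is actually one of these three — which is fine because $\{xy,yz,xz\}$ realizes every unordered pair of $\{x,y,z\}$. Incidentally, the distinctness of $P$ and $K$ is never used: when $P=K$ the assertion is just Proposition~\ref{Proposition 4.2}.
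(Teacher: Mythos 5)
Your proof is correct and follows essentially the same route as the paper: apply Lemma~\ref{Lemma 1.2} to $P$ and to $K$ separately and then observe that the resulting pair of homogeneous elements forces one of $xy,yz,xz$ into $P\cap K$. Your explicit case split on whether the two witnesses coincide is in fact slightly more careful than the paper's one-line ``$x_ix_j\in P\cap K$'', which tacitly relies on the same ideal-absorption argument when $i=j$.
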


\begin{proof} If $P\bigcap K=\{0\}$, then it is clear that $P\bigcap K$ is a graded weakly $2$-absorbing ideal of $R$. Let $x_{1}, x_{2}, x_{3}\in h(R)$ such that $0\neq x_{1}Rx_{2}Rx_{3}\subseteq P\bigcap K$. Then $0\neq x_{1}Rx_{2}Rx_{3}\subseteq P$ and $0\neq x_{1}Rx_{2}Rx_{3}\subseteq K$. It now follows from Lemma \ref{Lemma 1.2} that $x_{i}\in P$ and $x_{j}\in K$ for some $i, j$, and then $x_{i}x_{j}\in P\bigcap K$. Thereupon, $P\bigcap K$ is a graded weakly $2$-absorbing ideal of $R$.
\end{proof}

If $R$ and $T$ are two $G$-graded rings, then $R\times T$ is $G$-graded by $(R\times T)_{g}=R_{g}\times T_{g}$ for all $g\in G$. By (\cite{Bataineh Dawwas}, Lemma 4), $P\times K$ is a graded ideal of $R\times T$ if and only if $P$ is a graded ideal of $R$ and $K$ is a graded ideal of $T$. The next example shows that there are graded weakly $2$-absorbing ideals which are not graded weakly prime:

\begin{exa}\label{Remark 4.5} Consider $R=\mathbb{Z}_{2}[i]$, $T=\mathbb{Z}_{4}[i]$ and $G=\mathbb{Z}_{2}$. Then $R$ is $G$-graded by $R_{0}=\mathbb{Z}_{2}$ and $R_{1}=i\mathbb{Z}_{2}$. Also, $T$ is $G$-graded by $T_{0}=\mathbb{Z}_{4}$ and $T_{1}=i\mathbb{Z}_{4}$. So, $R\times T$ is $G$-graded by $(R\times T)_{j}=R_{j}\times T_{j}$ for all $j=0, 1$. Now, $\{0\}$ is a graded ideal of $R$ and $2T$ is a graded ideal of $T$ as $2\in h(T)$, so $P=\{0\}\times 2T$ is a graded ideal of $R\times T$ by (\cite{Bataineh Dawwas}, Lemma 4). Indeed, $P$ is not a graded weakly prime ideal of $R\times T$ since $x=(0, 1), y=(1, 2)\in h(R\times T)$ with $(0, 0)\neq xy=(0, 2)\in P$, $x\notin P$ and $y\notin P$. On the other hand, $P$ is graded $2$-absorbing and hence a graded weakly $2$-absorbing ideal of $R\times T$.
\end{exa}

\begin{thm}\label{Theorem 4.7} Let $P$ be a proper graded ideal of a graded ring $R$. Suppose that for graded left ideals $A, B$ and $C$ of $R$ such that $0\neq ABC\subseteq P$, we have $AC\subseteq P$ or $BC\subseteq P$ or $AB\subseteq P$. Then $P$ is a graded weakly $2$-absorbing ideal of $R$.
\end{thm}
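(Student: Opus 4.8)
The plan is to start from the hypothesis on homogeneous elements and bootstrap it up to graded left ideals using the principal graded left ideals generated by those elements. Suppose $x, y, z \in h(R)$ with $0 \neq xRyRz \subseteq P$. I want to produce graded left ideals $A$, $B$, $C$ with $0 \neq ABC \subseteq P$ so that the hypothesis applies, and then translate the conclusion $AC \subseteq P$ or $BC \subseteq P$ or $AB \subseteq P$ back into $xz \in P$ or $yz \in P$ or $xy \in P$. The natural candidates are $A = Rx + \mathbb{Z}x$, $B = Ry + \mathbb{Z}y$, $C = Rz + \mathbb{Z}z$, i.e.\ the graded left ideals generated by $x$, $y$, $z$ respectively (recall $R$ has unity, so these are just $Rx$, $Ry$, $Rz$, and they are graded since $x, y, z$ are homogeneous).

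The key computation is to check that $ABC \subseteq P$. We have $ABC = (Rx)(Ry)(Rz) = R\,(xRyRz) \subseteq RP \subseteq P$ since $P$ is a (two-sided) graded ideal and $xRyRz \subseteq P$. Also $xyz \in (Rx)(Ry)(Rz) = ABC$, and more to the point $xyz = 1\cdot x\cdot 1 \cdot y \cdot 1 \cdot z \in xRyRz$, so the hypothesis $xRyRz \neq 0$ is not quite what gives $ABC \neq 0$ directly, but $ABC \supseteq xRyRz \neq \{0\}$ does the job. Hence $0 \neq ABC \subseteq P$, and the hypothesis of the theorem yields $AC \subseteq P$ or $BC \subseteq P$ or $AB \subseteq P$. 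In the first case $xz \in (Rx)(Rz) = AC \subseteq P$ (using $x = 1\cdot x$, $z = 1\cdot z$); similarly the other two cases give $yz \in P$ and $xy \in P$ respectively. Therefore $P$ is a graded weakly $2$-absorbing ideal of $R$.

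The only mildly delicate point is making sure the generated objects are genuinely \emph{graded} left ideals so that the hypothesis is legitimately applicable, and that the containments $x \in Rx$ etc.\ and $xRyRz \subseteq (Rx)(Ry)(Rz)$ hold on the nose; both follow immediately from $R$ having a unity lying in $R_e$ and from $x, y, z$ being homogeneous. I expect no real obstacle here — this is the standard "pass from elements to principal ideals" argument, the analogue of Lemma \ref{1}, just run in the reverse direction and with three factors instead of two. One could equally well phrase the same proof with two-sided ideals, but left ideals are what the statement gives us and they suffice.
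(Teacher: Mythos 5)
Your proof is correct and follows essentially the same route as the paper's: pass from the homogeneous elements $x,y,z$ to the principal graded ideals they generate, verify $0\neq ABC\subseteq P$, and pull the conclusion back via $xy\in AB$, etc. The only cosmetic difference is that you use the graded left ideals $Rx, Ry, Rz$ where the paper uses the two-sided ideals $RxR, RyR, RzR$; both choices satisfy the hypothesis and the containments needed.
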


\begin{proof} Let $x, y, z\in h(R)$ such that $0\neq xRyRz\subseteq P$. Then $RxRyRzR\subseteq P$, and as a consequence, since $R$ has a unity, $0\neq xRyRz=1.xR.1.yR.1.z.1\subseteq(RxR)(RyR)(RzR)\subseteq P$. By assumption, we have $xy\in(RxR)(RyR)\subseteq P$ or $yz\in(RyR)(RzR)\subseteq P$ or $xz\in(RxR)(RzR)\subseteq P$. Thus, $P$ is a graded weakly $2$-absorbing ideal of $R$.
\end{proof}

If $R$ is a $G$-graded ring and $K$ is a graded ideal of $R$, then $R/K$ is $G$-graded by $(R/K)_{g}=(R_{g}+K)/K$ for all $g\in G$. By (\cite{Saber}, Lemma 3.2), if $K$ is a graded ideal of $R$ and $P$ is an ideal of $R$ such that $K\subseteq P$, then $P$ is a graded ideal of $R$ if and only if $P/K$ is a graded ideal of $R/K$.

\begin{prop}\label{Proposition 4.8} Let $R$ be a graded ring and $P$ be a graded weakly $2$-absorbing ideal of $R$. If $K$ is a graded ideal of $R$ with $K\subseteq P$, then $P/K$ is a graded weakly $2$-absorbing ideal of $R/K$.
\end{prop}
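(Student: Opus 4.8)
The plan is to transfer the whole situation along the canonical graded epimorphism $\pi\colon R\to R/K$, apply the graded weakly $2$-absorbing hypothesis on $P$ inside $R$, and then carry the conclusion back down to $R/K$. First I would check that $P/K$ is a proper graded ideal of $R/K$: it is a graded ideal by (\cite{Saber}, Lemma 3.2), since $K\subseteq P$ and $P$ is a graded ideal of $R$, and it is proper because $P$ is proper (a graded weakly $2$-absorbing ideal is proper by definition, and $P/K=R/K$ would force $P=R$).

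Next I would take arbitrary homogeneous elements $\overline{x},\overline{y},\overline{z}\in h(R/K)$, of degrees $g,h,\lambda$ say, with $\overline{0}\neq\overline{x}(R/K)\overline{y}(R/K)\overline{z}\subseteq P/K$, and lift them: since $(R/K)_{g}=(R_{g}+K)/K$, there are representatives $x\in R_{g}$, $y\in R_{h}$, $z\in R_{\lambda}$ with $\pi(x)=\overline{x}$, $\pi(y)=\overline{y}$, $\pi(z)=\overline{z}$, so $x,y,z\in h(R)$. Because $\pi$ is surjective, every element of $\overline{x}(R/K)\overline{y}(R/K)\overline{z}$ is of the form $\pi(xrysz)=xrysz+K$ with $r,s\in R$. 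Hence, using $K\subseteq P$, the containment $\overline{x}(R/K)\overline{y}(R/K)\overline{z}\subseteq P/K$ is equivalent to $xRyRz\subseteq P$, and $\overline{0}\neq\overline{x}(R/K)\overline{y}(R/K)\overline{z}$ yields $r,s\in R$ with $xrysz\notin K$; in particular $xrysz\neq 0$ (as $0\in K$), so $xRyRz\neq\{0\}$. Thus $0\neq xRyRz\subseteq P$.

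Now I would invoke the hypothesis that $P$ is a graded weakly $2$-absorbing ideal of $R$ to obtain $xy\in P$ or $yz\in P$ or $xz\in P$. Applying $\pi$ and using $\pi(x)\pi(y)=\pi(xy)$ together with the analogous identities, this gives $\overline{x}\,\overline{y}\in P/K$ or $\overline{y}\,\overline{z}\in P/K$ or $\overline{x}\,\overline{z}\in P/K$, which is exactly what is required. As $\overline{x},\overline{y},\overline{z}$ were arbitrary homogeneous elements of $R/K$ satisfying the hypothesis of the definition, $P/K$ is a graded weakly $2$-absorbing ideal of $R/K$.

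I do not anticipate a genuine obstacle here; the argument is the standard correspondence-theorem bookkeeping for graded quotient rings. The one place that deserves a moment's attention is the passage from the nonvanishing of $\overline{x}(R/K)\overline{y}(R/K)\overline{z}$ to the nonvanishing of $xRyRz$ in $R$ — this is precisely where $0\in K$ is used — since the definition of graded weakly $2$-absorbing requires the product to be genuinely nonzero rather than merely nonzero modulo $K$.
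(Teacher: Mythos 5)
Your proposal is correct and follows essentially the same route as the paper: lift homogeneous cosets to homogeneous representatives, observe that $0+K\neq(x+K)(R/K)(y+K)(R/K)(z+K)\subseteq P/K$ forces $0\neq xRyRz\subseteq P$, apply the hypothesis on $P$, and push the conclusion back down to $R/K$. The only difference is that you spell out the two points the paper leaves implicit — the choice of homogeneous representatives via $(R/K)_{g}=(R_{g}+K)/K$, and the use of $0\in K$ and $K\subseteq P$ in transferring the nonvanishing and the containment — which is a welcome but inessential elaboration.
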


\begin{proof} Let $x+K, y+K, z+K\in h(R/K)$ such that $0+K\neq(x+K)(R/K)(y+K)(R/K)(z+K)\subseteq P/K$. Then $x, y, z\in h(R)$ such that $0\neq xRyRz\subseteq P$. As $P$ is a graded weakly $2$-absorbing ideal of $R$, we have $xy\in P$ or $yz\in P$ or $xz\in P$, and hence $(x+K)(y+K)\in P/K$ or $(y+K)(z+K)\in P/K$ or $(x+K)(z+K)\in P/K$. So, $P/K$ is a graded weakly $2$-absorbing ideal of $R/K$.
\end{proof}

\begin{prop}\label{Proposition 4.9} Let $K\subseteq P$ be proper graded ideals of a graded ring $R$. If $K$ is a graded weakly $2$-absorbing ideal of $R$ and $P/K$ is a graded weakly $2$-absorbing ideal of $R/K$, then $P$ is a graded weakly $2$-absorbing ideal of $R$.
\end{prop}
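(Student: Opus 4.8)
The plan is to run the usual correspondence argument between $R$ and $R/K$, splitting according to whether the triple product $xRyRz$ already lands inside $K$. So I would start by fixing arbitrary $x,y,z\in h(R)$ with $0\neq xRyRz\subseteq P$, and aim to show $xy\in P$ or $yz\in P$ or $xz\in P$.

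The one preliminary fact I would record first, since it is used in both cases, is the identity $(x+K)(R/K)(y+K)(R/K)(z+K)=(xRyRz+K)/K$, which is immediate from the fact that the canonical projection $\pi:R\to R/K$ is a surjective graded ring homomorphism. Consequently this subset of $R/K$ is the zero ideal precisely when $xRyRz\subseteq K$, and it is contained in $P/K$ precisely when $xRyRz\subseteq P$, the latter using $K\subseteq P$. Also $x+K,y+K,z+K\in h(R/K)$ because $x,y,z\in h(R)$.

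\textbf{Case 1: $xRyRz\subseteq K$.} Then $0\neq xRyRz\subseteq K$, and since $K$ is a graded weakly $2$-absorbing ideal of $R$ we get $xy\in K\subseteq P$ or $yz\in K\subseteq P$ or $xz\in K\subseteq P$, as desired. \textbf{Case 2: $xRyRz\not\subseteq K$.} Then by the identity above, $(x+K)(R/K)(y+K)(R/K)(z+K)=(xRyRz+K)/K$ is a nonzero subset of $P/K$; since $P/K$ is a graded weakly $2$-absorbing ideal of $R/K$, one of $(x+K)(y+K),(y+K)(z+K),(x+K)(z+K)$ lies in $P/K$, which (again using $K\subseteq P$, so that $w+K\in P/K$ forces $w\in P$) translates back to $xy\in P$ or $yz\in P$ or $xz\in P$. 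Either way $P$ satisfies the defining condition, so $P$ is a graded weakly $2$-absorbing ideal of $R$.

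I do not anticipate a real obstacle here; the only point that needs care is the bookkeeping, namely verifying the displayed identity and, in particular, making sure the ``nonzero'' hypothesis is transported to the correct object in each case — to $xRyRz$ itself when invoking the hypothesis on $K$, and to its image $(xRyRz+K)/K$ when invoking the hypothesis on $P/K$. (One could also dispense with the case split by arguing by contradiction: if none of $xy,yz,xz$ were in $P$, their images would avoid $P/K$, forcing $(xRyRz+K)/K=0$, i.e.\ $xRyRz\subseteq K$; but then $0\neq xRyRz\subseteq K$ and the weak $2$-absorbing property of $K$ still puts one of $xy,yz,xz$ in $K\subseteq P$, a contradiction.)
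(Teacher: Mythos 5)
Your proposal is correct and follows essentially the same route as the paper's own proof: the same case split on whether $xRyRz\subseteq K$, invoking the weak $2$-absorbing property of $K$ in the first case and of $P/K$ in the second, with the translation back to $P$ via $K\subseteq P$. The only difference is that you spell out the identity $(x+K)(R/K)(y+K)(R/K)(z+K)=(xRyRz+K)/K$ explicitly, which the paper leaves implicit.
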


\begin{proof} Let $x, y, z\in h(R)$ such that $0\neq xRyRz\subseteq P$. Then $x+K, y+K, z+K\in h(R/K)$ such that $(x+K)(R/K)(y+K)(R/K)(z+K)\subseteq P/K$. If $0\neq xRyRz\subseteq K$, then $xy\in K\subseteq P$ or $yz\in K\subseteq P$ or $xz\in K\subseteq P$ since $K$ is a graded weakly $2$-absorbing ideal of $R$. If $xRyRz\nsubseteq K$, then $0+K\neq(x+K)(R/K)(y+K)(R/K)(z+K)\subseteq P/K$. Since $P/K$ is a graded weakly $2$-absorbing ideal of $R/K$, $(x+K)(y+K)\in P/K$ or $(y+K)(z+K)\in P/K$ or $(x+K)(z+K)\in P/K$, which implies that $xy\in P$ or $yz\in P$ or $xz\in P$. Therefore, $P$ is a graded weakly $2$-absorbing ideal of $R$.
\end{proof}

Let $R$ and $T$ be two $G$-graded rings. In \cite{Nastasescue}, a ring homomorphism $f:R\rightarrow T$ is said to be a graded homomorphism if $f(R_{g})\subseteq T_{g}$ for all $g\in G$.

\begin{prop} Let $R$ and $T$ be two $G$-rings and $f:R\rightarrow T$ be a graded homomorphism. Then $Ker(f)$ is a graded ideal of $R$.
\end{prop}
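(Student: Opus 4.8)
The plan is to show that $\ker(f)$ is a graded ideal of $R$, which by definition means establishing that $\ker(f) = \bigoplus_{g \in G}(\ker(f) \cap R_g)$; equivalently, whenever $a \in \ker(f)$ and we write $a = \sum_{g \in G} a_g$ with $a_g \in R_g$, each homogeneous component $a_g$ also lies in $\ker(f)$. Since $\ker(f)$ is already known to be a two-sided ideal of $R$ from basic ring theory (it is closed under addition and under left and right multiplication by arbitrary elements of $R$), the only thing that needs proof is this homogeneity condition.

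First I would take an arbitrary $a \in \ker(f)$ and write its unique decomposition $a = \sum_{g \in G} a_g$ with $a_g \in R_g$ (all but finitely many zero). Applying $f$ gives $0 = f(a) = f\bigl(\sum_{g} a_g\bigr) = \sum_{g} f(a_g)$ by additivity of $f$. The key step is then to invoke that $f$ is a graded homomorphism: $f(a_g) \in T_g$ for each $g \in G$. Hence $\sum_{g} f(a_g)$ is a decomposition of the element $0 \in T$ into homogeneous components of $T$, one in each $T_g$. By the uniqueness of the homogeneous decomposition in the graded ring $T$ (the sum $T = \bigoplus_{g} T_g$ is direct), and since $0 = \sum_{g} 0$ with $0 \in T_g$ trivially, we must have $f(a_g) = 0$ for every $g \in G$. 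Therefore $a_g \in \ker(f)$ for all $g$, which is exactly the statement that $\ker(f) = \bigoplus_{g \in G}(\ker(f) \cap R_g)$.

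I do not anticipate a serious obstacle here; the argument is essentially a direct unwinding of the definitions. The only point requiring a little care is making explicit that the directness of the decomposition $T = \bigoplus_{g \in G} T_g$ forces $f(a_g) = 0$ componentwise — this is where gradedness of $f$ is genuinely used, as opposed to $f$ being merely a ring homomorphism. I would state that step cleanly and then conclude. For completeness I would also remark at the outset (or leave implicit as standard) that $\ker(f)$ is a two-sided ideal, so that combining this with the homogeneity just proved yields that $\ker(f)$ is a graded ideal of $R$.
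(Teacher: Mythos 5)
Your argument is correct and is essentially the paper's own proof: decompose $a\in\ker(f)$ into homogeneous components, apply $f$, use $f(a_g)\in T_g$ and the directness of $T=\bigoplus_{g\in G}T_g$ to force $f(a_g)=0$ for each $g$. The paper phrases the last step as ``$\{0\}$ is a graded ideal of $T$,'' which is just another way of invoking the uniqueness of homogeneous decompositions that you use.
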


\begin{proof} Clearly, $Ker(f)$ is an ideal of $R$. Let $x\in Ker(f)$. Then $x\in R$ such that $f(x)=0$. Now, $x=\displaystyle\sum_{g\in G}x_{g}$, where $x_{g}\in R_{g}$ for all $g\in G$, which implies that $f(x_{g})\in f(R_{g})\subseteq T_{g}$ for all $g\in G$. So, for $g\in G$, $f(x_{g})\in h(T)$ with $0=f(x)=f\left(\displaystyle\sum_{g\in G}x_{g}\right)=\displaystyle\sum_{g\in G}f(x_{g})$, which yields that $f(x_{g})=0$ for all $g\in G$ as $\{0\}$ is a graded ideal. Therefore, $x_{g}\in Ker(f)$ for all $g\in G$, and hence $Ker(f)$ is a graded ideal of $R$.
\end{proof}

\begin{thm}\label{Theorem 4.10} Let $R$ and $T$ be two $G$-rings and $f:R\rightarrow T$ be a surjective graded homomorphism.
\begin{enumerate}
\item If $P$ is a graded weakly $2$-absorbing ideal of $R$ and $Ker(f)\subseteq P$, then $f(P)$ is a graded weakly $2$-absorbing ideal of $T$.

\item If $I$ is a graded weakly $2$-absorbing ideal of $T$ and $Ker(f)$ is a graded weakly $2$-absorbing ideal of $R$, then $f^{-1}(I)$ is a graded weakly $2$-absorbing ideal of $R$.
\end{enumerate}
\end{thm}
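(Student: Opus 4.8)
The plan is to handle the two parts separately, each time transporting the relevant triple product along $f$ and then invoking the defining property of a graded weakly $2$-absorbing ideal. First I would record the facts used throughout: since $f$ is a \emph{surjective} graded homomorphism, $f(R_g)=T_g$ for every $g\in G$ (lift a homogeneous element of $T_g$ to $R$, decompose it into homogeneous components, apply $f$, and use uniqueness of homogeneous components in $T$); consequently $f(R)=T$ and $f(AB)=f(A)f(B)$ for subsets $A,B\subseteq R$, so $f(xRyRz)=f(x)\,T\,f(y)\,T\,f(z)$ for homogeneous $x,y,z$. Also $f(f^{-1}(I))=I$ by surjectivity, and $f^{-1}(f(P))=P$ because $\mathrm{Ker}(f)\subseteq P$.

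For (1): I would first check that $f(P)$ is a proper graded ideal of $T$. It is an ideal since $f$ is onto; it is proper because $f(P)=T$ together with $f^{-1}(f(P))=P$ would force $P=R$; and it is graded since, for $t=f(p)\in f(P)$ with $p=\sum_g p_g$, $p_g\in P$ (using that $P$ is graded), we get $t=\sum_g f(p_g)$ with each $f(p_g)\in T_g\cap f(P)$, so every homogeneous component of $t$ lies in $f(P)$. Now take $x',y',z'\in h(T)$ with $0\neq x'Ty'Tz'\subseteq f(P)$, and choose homogeneous preimages $x,y,z\in h(R)$, which exist because $f(R_g)=T_g$. Then $f(xRyRz)=x'Ty'Tz'\subseteq f(P)$, hence $xRyRz\subseteq f^{-1}(f(P))=P$; moreover $xRyRz\neq\{0\}$, for otherwise its image $x'Ty'Tz'$ would be $\{0\}$. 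Since $P$ is graded weakly $2$-absorbing, $xy\in P$ or $yz\in P$ or $xz\in P$, and applying $f$ gives $x'y'\in f(P)$ or $y'z'\in f(P)$ or $x'z'\in f(P)$.

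For (2): $f^{-1}(I)$ is an ideal of $R$; it is proper (else $I=f(R)=T$, contradicting that $I$ is proper) and graded (for $a\in f^{-1}(I)$ with $a=\sum_g a_g$, we have $f(a_g)\in T_g$ and $f(a)=\sum_g f(a_g)\in I$, so gradedness of $I$ forces each $f(a_g)\in I$, i.e. each $a_g\in f^{-1}(I)$). Given $x,y,z\in h(R)$ with $0\neq xRyRz\subseteq f^{-1}(I)$, put $x'=f(x),\,y'=f(y),\,z'=f(z)\in h(T)$, so $f(xRyRz)=x'Ty'Tz'\subseteq I$. Here I would split into two cases, in the style of Proposition~\ref{Proposition 4.9}. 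If $x'Ty'Tz'\neq\{0\}$, then $0\neq x'Ty'Tz'\subseteq I$ and the hypothesis on $I$ yields one of $x'y',\,y'z',\,x'z'\in I$, hence the corresponding one of $xy,\,yz,\,xz$ lies in $f^{-1}(I)$. If $x'Ty'Tz'=\{0\}$, then $xRyRz\subseteq\mathrm{Ker}(f)$, so $0\neq xRyRz\subseteq\mathrm{Ker}(f)$, and since $\mathrm{Ker}(f)$ is graded weakly $2$-absorbing we obtain one of $xy,\,yz,\,xz$ in $\mathrm{Ker}(f)\subseteq f^{-1}(I)$.

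The gradedness and properness verifications are routine; the only place that needs a genuine idea is the case distinction in (2), namely observing that the ``bad'' case $x'Ty'Tz'=\{0\}$ is exactly the situation $0\neq xRyRz\subseteq\mathrm{Ker}(f)$, which is precisely where the hypothesis that $\mathrm{Ker}(f)$ is itself graded weakly $2$-absorbing gets used --- mirroring the role of $K$ in Proposition~\ref{Proposition 4.9}. Beyond this I anticipate no real obstacle, provided one is careful that the preimages $x,y,z$ can be taken homogeneous, which is exactly what surjectivity of the graded homomorphism guarantees.
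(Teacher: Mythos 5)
Your proof is correct, and it reaches the conclusion by a direct element-wise verification rather than by the paper's route. The paper proves both parts by passing to the quotient: for (1) it combines Proposition \ref{Proposition 4.8} (which shows $P/\mathrm{Ker}(f)$ is graded weakly $2$-absorbing in $R/\mathrm{Ker}(f)$) with the graded isomorphism $R/\mathrm{Ker}(f)\cong T$, and for (2) it sets $K=f^{-1}(I)$, identifies $K/\mathrm{Ker}(f)$ with $I$ under the same isomorphism, and applies Proposition \ref{Proposition 4.9}. Your argument is essentially those two propositions inlined and transported along $f$: your lifting of homogeneous elements via $f(R_g)=T_g$ plays the role of the identification $(R/\mathrm{Ker}(f))_g=(R_g+\mathrm{Ker}(f))/\mathrm{Ker}(f)$, and your case split in (2) on whether $x'Ty'Tz'=\{0\}$ is exactly the dichotomy $xRyRz\subseteq K$ versus $xRyRz\nsubseteq K$ inside Proposition \ref{Proposition 4.9}. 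What your version buys is self-containedness: the paper leaves implicit both that $R/\mathrm{Ker}(f)\cong T$ as graded rings and that graded weak $2$-absorbingness is preserved under such an isomorphism, whereas you verify the needed transfers (surjectivity on each homogeneous component, $f^{-1}(f(P))=P$ when $\mathrm{Ker}(f)\subseteq P$, gradedness and properness of the image and preimage) explicitly. The cost is length; the paper's version is shorter because the quotient machinery has already been set up. Both are sound.
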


\begin{proof}
\begin{enumerate}
\item By (\cite{Refai Dawwas}, Lemma 3.11 (2)), $f(P)$ is a graded ideal of $T$. Since $P$ is a graded weakly $2$-absorbing ideal of $R$ and $Ker(f)\subseteq P$, we conclude that $P/Ker(f)$ is a graded weakly $2$-absorbing ideal of $R/Ker(f)$ by Proposition \ref{Proposition 4.8}. Since $R/Ker(f)$ is isomorphic to $T$, the result holds.

\item By (\cite{Refai Dawwas}, Lemma 3.11 (1)), $f^{-1}(I)$ is a graded ideal of $R$. Let $K=f^{-1}(I)$. Then $Ker(f)\subseteq K$. Since $R/Ker(f)$ is isomorphic to $T$,we conclude that $K/Ker(f)$ is a graded weakly $2$-absorbing ideal of $R/Ker(f)$. Since $Ker(f)$ is a graded weakly $2$-absorbing ideal of $R$ and $K/Ker(f)$ is a graded weakly $2$-absorbing ideal of $R/Ker(f)$, we conclude that $K=f^{-1}(I)$ is a graded weakly $2$-absorbing ideal of $R$ by Proposition \ref{Proposition 4.9}.
\end{enumerate}
\end{proof}

Motivated by Theorem \ref{Theorem 4.7}, we have the following question:

\begin{question} Suppose that $P$ is a graded weakly $2$-absorbing ideal of $R$ that is not a graded $2$-absorbing ideal and $0\neq ABK\subseteq P$ for some graded ideals $A, B$ and $K$ of $R$. Does it imply that $AB\subseteq P$ or $AK\subseteq P$ or $BK\subseteq P$?
\end{question}

We will give a partial answer through the coming discussions. Motivated by (\cite{Zoubi Dawwas Ceken}, Definition 3.3), we introduce the following:

\begin{defn} Let $R$ be a $G$-graded ring, $g\in G$ and $P$ be a graded ideal of $R$ with $P_{g}\neq R_{g}$.
\begin{enumerate}
\item $P$ is said to be a $g$-$2$-absorbing ideal of $R$ if whenever $x, y, z\in R_{g}$ such that $xR_{e}yR_{e}z\subseteq P$, then $xy\in P$ or $yz\in P$ or $xz\in P$.

\item $P$ is said to be a $g$-weakly $2$-absorbing ideal of $R$ if whenever $x, y, z\in R_{g}$ such that $0\neq xR_{e}yR_{e}z\subseteq P$, then $xy\in P$ or $yz\in P$ or $xz\in P$.

\item Let $P$ be a $g$-weakly $2$-absorbing ideal of $R$ and $x, y, z\in R_{g}$. We say that $(x, y, z)$ is a $g$-triple-zero of $P$ if $xR_{e}yR_{e}z=0$, $xy\notin P$, $yz\notin P$ and $xz\notin P$.
\end{enumerate}
\end{defn}

Note that if $P$ is a $g$-weakly 2-absorbing ideal of $R$ that is not a $g$-$2$-absorbing ideal, then $P$ has a $g$-triple-zero $(x, y, z)$ for some $x, y, z\in R_{g}$.

\begin{prop}\label{Proposition 4.12} Let $P$ be a $g$-weakly $2$-absorbing ideal of $R$ and suppose that $xR_{e}yK_{g}\subseteq P$ for some $x, y\in R_{g}$ and some graded left ideal $K$ of $R$. Assume that $(x, y, z)$ is not a $g$-triple-zero of $P$ for every $z\in K_{g}$. If $xy\notin P$, then $xK_{g}\subseteq P$ or $yK_{g}\subseteq P$.
\end{prop}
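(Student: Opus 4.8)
The plan is to argue by contradiction, mimicking the classical ``triple\nobreakdash-zero'' arguments for $2$-absorbing ideals, using the additive structure of $K_g$. First I would record the auxiliary observation that for every $z\in K_g$ one has $xR_eyR_ez\subseteq xR_eyK_g\subseteq P$; this uses only that $K$ is a graded left ideal, so that $R_ez\subseteq R_eR_g\cap K\subseteq R_g\cap K=K_g$. This is the only place the left-ideal hypothesis enters.

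The key reduction is to prove that for \emph{every} $z\in K_g$ we have $xz\in P$ or $yz\in P$. Fix such a $z$. If $xR_eyR_ez\neq 0$, then $0\neq xR_eyR_ez\subseteq P$, and since $P$ is $g$-weakly $2$-absorbing together with the hypothesis $xy\notin P$, we are forced into $yz\in P$ or $xz\in P$. If instead $xR_eyR_ez=0$, then since $(x,y,z)$ is by hypothesis not a $g$-triple-zero of $P$ and $xy\notin P$, one of the remaining defining conditions of a $g$-triple-zero must fail, that is, again $yz\in P$ or $xz\in P$. This case split is exactly where the ``not a $g$-triple-zero'' hypothesis is consumed, and it is the step I would check most carefully, though it is short.

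With this dichotomy in hand, suppose toward a contradiction that $xK_g\not\subseteq P$ and $yK_g\not\subseteq P$, and choose $z_1,z_2\in K_g$ with $xz_1\notin P$ and $yz_2\notin P$; the dichotomy applied to $z_1$ and to $z_2$ then gives $yz_1\in P$ and $xz_2\in P$. Now apply the dichotomy to $z_1+z_2\in K_g$ (here we use that $K_g$ is an additive subgroup): either $x(z_1+z_2)\in P$, whence $xz_1=x(z_1+z_2)-xz_2\in P$, a contradiction, or $y(z_1+z_2)\in P$, whence $yz_2=y(z_1+z_2)-yz_1\in P$, again a contradiction. Therefore $xK_g\subseteq P$ or $yK_g\subseteq P$, as required.

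The main obstacle is conceptual rather than computational: one must set up the right intermediate statement (the elementwise dichotomy ``$xz\in P$ or $yz\in P$ for all $z\in K_g$'') and see that the triple-zero hypothesis handles precisely the degenerate case $xR_eyR_ez=0$. Once that is isolated, the passage from the elementwise statement to the ideal-theoretic conclusion is the standard linear-combination trick and requires no further input beyond $xy\notin P$.
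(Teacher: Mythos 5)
Your proposal is correct and follows essentially the same route as the paper: assume $xK_g\not\subseteq P$ and $yK_g\not\subseteq P$, pick witnesses, derive $yz_1\in P$ and $xz_2\in P$ from the weakly-$2$-absorbing/no-triple-zero dichotomy, and reach a contradiction via $z_1+z_2$. Your version is in fact slightly cleaner in that it isolates the elementwise dichotomy (``$xz\in P$ or $yz\in P$ for all $z\in K_g$'') and makes explicit the case split on $xR_eyR_ez=0$ versus $\neq 0$, which the paper leaves implicit.
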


\begin{proof} Suppose that $xK_{g}\nsubseteq P$ and $yK_{g}\nsubseteq P$. Then there exist $r, s\in K_{g}$ such that $xr\notin P$ and $ys\notin P$. Since $xR_{e}yR_{e}r\subseteq xR_{e}yK_{g}\subseteq P$ and since $(x, y, r)$ is not a $g$-triple-zero of $P$ and $xy\notin P$, $xr\notin P$, we obtain that $yr\in P$. Also, since $xR_{e}yR_{e}s\subseteq xR_{e}yK_{g}\subseteq P$ and since $(x, y, s)$ is not a $g$-triple-zero of $P$ and $xy\notin P$, $ys\notin P$, we obtain that $xs\in P$. Now, since $xR_{e}yR_{e}(r+s)\subseteq xR_{e}yK_{g}\subseteq P$ and since $(x, y, r+s)$ is not a $g$-triple-zero of $P$ and $xy\notin P$, we get that $x(r+s)\in P$ or $y(r+s)\in P$. If $x(r+s)\in P$, then since $xs\in P$, $xr\in P$, a contradiction. If $y(r+s)\in P$, then since $yr\in P$, $ys\in P$, a contradiction. Hence, $xK_{g}\subseteq P$ or $yK_{g}\subseteq P$.
\end{proof}

\begin{defn} Let $R$ be a $G$-graded ring, $g\in G$ and $P$ be a $g$-weakly $2$-absorbing ideal of $R$ and suppose that $A_{g}B_{g}K_{g}\subseteq P$ for some graded ideals $A, B$ and $K$ of $R$. We say that $P$ is free $g$-triple-zero with respect to $ABK$ if $(x, y, z)$ is not a $g$-triple-zero of $P$ for every $x\in A_{g}, y\in B_{g}$ and $z\in K_{g}$.
\end{defn}

The next proposition is clear.

\begin{prop}\label{Remark 4.14} Let $P$ be a $g$-weakly $2$-absorbing ideal of $R$ and suppose that $A_{g}B_{g}K_{g}\subseteq P$ for some graded ideals $A, B$ and $K$ of $R$ such that $P$ is free $g$-triple-zero with respect to $ABK$. If $x\in A_{g}, y\in B_{g}$ and $z\in K_{g}$, then $xy\in P$ or $xz\in P$ or $yz\in P$.
\end{prop}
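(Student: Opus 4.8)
The plan is to fix homogeneous elements $x\in A_{g}$, $y\in B_{g}$, $z\in K_{g}$ and reduce the problem to a dichotomy according to whether the product $xR_{e}yR_{e}z$ vanishes. First I would record the containment $xR_{e}yR_{e}z\subseteq P$ with $x,y,z\in R_{g}$: since $B$ and $K$ are graded ideals of $R$ and $R_{e}$ lies in degree $e$, multiplication by $R_{e}$ preserves homogeneous degree, so $R_{e}y\subseteq B\cap R_{g}=B_{g}$ and $R_{e}z\subseteq K\cap R_{g}=K_{g}$; hence $xR_{e}yR_{e}z\subseteq A_{g}B_{g}K_{g}\subseteq P$.

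Next I would split into two cases. If $xR_{e}yR_{e}z\neq 0$, then $0\neq xR_{e}yR_{e}z\subseteq P$ with $x,y,z\in R_{g}$, and since $P$ is a $g$-weakly $2$-absorbing ideal of $R$, the definition immediately gives $xy\in P$ or $yz\in P$ or $xz\in P$. If instead $xR_{e}yR_{e}z=0$, then, because $P$ is free $g$-triple-zero with respect to $ABK$ and $x\in A_{g}$, $y\in B_{g}$, $z\in K_{g}$, the triple $(x,y,z)$ is \emph{not} a $g$-triple-zero of $P$; since the condition $xR_{e}yR_{e}z=0$ already holds, the only way for $(x,y,z)$ to fail to be a $g$-triple-zero is that at least one of $xy$, $yz$, $xz$ belongs to $P$. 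In either case the conclusion holds.

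There is essentially no obstacle here; this is why the statement is flagged as clear. The only point requiring a moment's care is the containment $xR_{e}yR_{e}z\subseteq A_{g}B_{g}K_{g}$, where one must invoke that products with $R_{e}$ stay in the same homogeneous degree, so that $R_{e}y$ and $R_{e}z$ remain inside the degree-$g$ components $B_{g}$ and $K_{g}$ of the graded ideals $B$ and $K$; after that, the argument is just an unwinding of the definitions of $g$-weakly $2$-absorbing ideal, $g$-triple-zero, and free $g$-triple-zero with respect to $ABK$.
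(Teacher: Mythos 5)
Your proof is correct and is precisely the argument the paper leaves implicit (it states only that ``the next proposition is clear''): the containment $xR_{e}yR_{e}z\subseteq A_{g}B_{g}K_{g}\subseteq P$ followed by the dichotomy on whether $xR_{e}yR_{e}z$ vanishes, using the $g$-weakly $2$-absorbing property in the nonzero case and the free $g$-triple-zero hypothesis in the zero case. Nothing further is needed.
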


\begin{thm}\label{Theorem 4.15} Let $P$ be a $g$-weakly $2$-absorbing ideal of $R$ and suppose that $0\neq A_{g}B_{g}K_{g}\subseteq P$ for some graded ideals $A, B$ and $K$ of $R$ such that $P$ is free $g$-triple-zero with respect to $ABK$. Then $A_{g}K_{g}\subseteq P$ or $B_{g}K_{g}\subseteq P$ or $A_{g}B_{g}\subseteq P$.
\end{thm}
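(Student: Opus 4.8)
The plan is to prove the stated dichotomy by contradiction, using only Proposition~\ref{Proposition 4.12}, the hypothesis that $P$ is free $g$-triple-zero with respect to $ABK$, and the fact that $P$, $A_g$, $B_g$ and $K_g$ are additive subgroups of $R$. First I would isolate the following reduction step, call it $(\star)$: \emph{if $a\in A_g$ and $b\in B_g$ satisfy $ab\notin P$, then $aK_g\subseteq P$ or $bK_g\subseteq P$.} Indeed, since $A$ is a graded two-sided ideal we have $aR_e\subseteq A\cap R_g=A_g$, hence $aR_ebK_g\subseteq A_gB_gK_g\subseteq P$; moreover $(a,b,z)$ is not a $g$-triple-zero of $P$ for any $z\in K_g$ because $P$ is free $g$-triple-zero with respect to $ABK$; so Proposition~\ref{Proposition 4.12} applies and yields the claim. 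Note that $(\star)$ will be used not only for $a,b$ themselves but also for sums such as $a+a'$ (with $a,a'\in A_g$) and $b+b'$, which still lie in $A_g$, $B_g$ and are still covered by the free-triple-zero hypothesis.

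Assume now, for contradiction, that $A_gK_g\nsubseteq P$, $B_gK_g\nsubseteq P$ and $A_gB_g\nsubseteq P$. Fix $a_0\in A_g$, $b_0\in B_g$ with $a_0b_0\notin P$, and fix $a_1\in A_g$ with $a_1K_g\nsubseteq P$ and $b_1\in B_g$ with $b_1K_g\nsubseteq P$. Two preliminary facts would be established. (i) $a_1b_1\in P$: otherwise $(\star)$ applied to $a_1,b_1$ would force $a_1K_g\subseteq P$ or $b_1K_g\subseteq P$, against the choices. (ii) Both $a_0K_g\subseteq P$ and $b_0K_g\subseteq P$ hold. For this, first apply $(\star)$ to $a_0,b_0$ to get one of the two; then note each implies the other. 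If $a_0K_g\subseteq P$: consider $a_1b_0$ directly when $a_1b_0\notin P$, and $(a_0+a_1)b_0=a_0b_0+a_1b_0\notin P$ otherwise, and apply $(\star)$; in each sub-case the other alternative produced by $(\star)$ would, possibly after subtracting $a_0K_g\subseteq P$, give $a_1K_g\subseteq P$, which is excluded, so $b_0K_g\subseteq P$. The reverse implication $b_0K_g\subseteq P\Rightarrow a_0K_g\subseteq P$ is obtained by the mirror argument, now using $b_1$ and the products $a_0b_1$, $a_0(b_0+b_1)$.

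The contradiction would then be extracted by a three-way case split on whether $a_0b_1$ and $a_1b_0$ lie in $P$. If both lie in $P$, then $(a_0+a_1)(b_0+b_1)=a_0b_0+a_0b_1+a_1b_0+a_1b_1\notin P$ (only $a_0b_0$ among the four summands lies outside $P$), so $(\star)$ gives $(a_0+a_1)K_g\subseteq P$ or $(b_0+b_1)K_g\subseteq P$; subtracting $a_0K_g\subseteq P$ in the first case gives $a_1K_g\subseteq P$, and subtracting $b_0K_g\subseteq P$ in the second gives $b_1K_g\subseteq P$ --- both impossible. If $a_0b_1\notin P$, then $(a_0+a_1)b_1=a_0b_1+a_1b_1\notin P$, and $(\star)$ forces $(a_0+a_1)K_g\subseteq P$ (as $b_1K_g\nsubseteq P$), hence $a_1K_g\subseteq P$, impossible. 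If $a_1b_0\notin P$, then $a_1(b_0+b_1)=a_1b_0+a_1b_1\notin P$, and $(\star)$ forces $(b_0+b_1)K_g\subseteq P$ (as $a_1K_g\nsubseteq P$), hence $b_1K_g\subseteq P$, impossible. Since these cases are exhaustive, the assumption fails and one of $A_gK_g\subseteq P$, $B_gK_g\subseteq P$, $A_gB_g\subseteq P$ must hold.

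I do not expect a deep obstacle; the argument is essentially a bookkeeping exercise in tracking which products $xy$ and which sets of the form $xK_g$ are known to be inside or outside $P$ while repeatedly feeding suitable elements to Proposition~\ref{Proposition 4.12}. Two points need attention. First, the non-commutativity: $(\star)$ is genuinely asymmetric in $A$ and $B$ --- it rests on the inclusion $aR_ebK_g\subseteq A_gB_gK_g$, for which there is no counterpart with the factors reversed --- so one cannot appeal to an $A\leftrightarrow B$ symmetry, and this is exactly why ``both $a_0K_g\subseteq P$ and $b_0K_g\subseteq P$'' has to be proved in step (ii) rather than assumed by symmetry. Second, each time Proposition~\ref{Proposition 4.12} is invoked one must check the element is admissible: first coordinate in $A_g$, second in $B_g$, the relevant product outside $P$, and the triple not a $g$-triple-zero. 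The hypothesis $0\neq A_gB_gK_g$ is not really needed beyond ensuring a nontrivial setting; all the force comes from the free $g$-triple-zero assumption through Proposition~\ref{Proposition 4.12}.
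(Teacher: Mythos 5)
Your argument is correct and is essentially the paper's own proof: both rest entirely on Proposition~\ref{Proposition 4.12} applied to the same four witnesses (an $a_1\in A_g$ with $a_1K_g\nsubseteq P$, a $b_1\in B_g$ with $b_1K_g\nsubseteq P$, and $a_0\in A_g$, $b_0\in B_g$ with $a_0b_0\notin P$) together with the additive trick of feeding sums such as $a_0+a_1$ and $b_0+b_1$ back into the proposition. The only difference is bookkeeping --- the paper cases directly on which of $a_0K_g$, $b_0K_g$ lies in $P$, while you first prove both containments and then split on whether $a_0b_1$ and $a_1b_0$ lie in $P$ --- and your closing observations (the asymmetry $aR_e\subseteq A_g$ justifying $(\star)$, and the fact that $A_gB_gK_g\neq 0$ is never actually used) are accurate.
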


\begin{proof} Suppose that $A_{g}K_{g}\nsubseteq P$, $B_{g}K_{g}\nsubseteq P$ and $A_{g}B_{g}\nsubseteq P$. Then there exist $x\in A_{g}$ and $y\in B_{g}$ such that $xK_{g}\nsubseteq P$ and $yK_{g}\nsubseteq P$. Now, $xR_{e}yK_{g}\subseteq A_{g}B_{g}K_{g}\subseteq P$. Since $xK_{g}\nsubseteq P$ and $yK_{g}\nsubseteq P$, it follows from Proposition \ref{Proposition 4.12} that $xy\in P$. Since $A_{g}B_{g}\nsubseteq P$, there exist $a\in A_{g}$ and $b\in B_{g}$ such that $ab\notin P$. Since $aR_{e}bK_{g}\subseteq A_{g}B_{g}K_{g}\subseteq P$ and $ab\notin P$, it follows from Proposition \ref{Proposition 4.12} that $aK_{g}\subseteq P$ or $bK_{g}\subseteq P$.

\underline{Case (1):} $aK_{g}\subseteq P$ and $bK_{g}\nsubseteq P$. Since $xR_{e}bK_{g}\subseteq A_{g}B_{g}K_{g}\subseteq P$ and $xK_{g}\nsubseteq P$ and $bK_{g}\nsubseteq P$, it follows from Proposition \ref{Proposition 4.12} that $xb\in P$. Since $aK_{g}\subseteq P$ and $xK_{g}\nsubseteq P$, we obtain that $(x+a)K_{g}\nsubseteq P$. On the other hand, since $(x+a)R_{e}bK_{g}\subseteq P$ and neither $(x+a)K_{g}\subseteq P$ nor $bK_{g}\subseteq P$, we have that $(x+a)b\in P$ by Proposition \ref{Proposition 4.12}, and hence $ab\in P$, a contradiction.

\underline{Case (2):} $bK_{g}\subseteq P$ and $aK_{g}\nsubseteq P$. Using an analogous altercation to case (1), we will have a contradiction.

\underline{Case (3):} $aK_{g}\subseteq P$ and $bK_{g}\subseteq P$. Since $bK_{g}\subseteq P$ and $yK_{g}\nsubseteq P$, $(y+b)K_{g}\nsubseteq P$. But $xR_{e}(y+b)K_{g}\subseteq P$ and neither $xK_{g}\subseteq P$ nor $(y+b)K_{g}\subseteq P$, and hence $x(y+b)\subseteq P$ by Proposition \ref{Proposition 4.12}. Since $xy\in P$ and $(xy+xb)\in P$, we have that $xb\in P$. Since $(x+a)R_{e}yK_{g}\subseteq P$ and neither $yK_{g}\subseteq P$ nor $(x+a)K_{g}\subseteq P$, we conclude that $(x+a)y\in P$ by Proposition \ref{Proposition 4.12}, and hence $ax\in P$. Since $(x+a)R_{e}(y+b)K_{g}\subseteq P$ and neither $(x+a)K_{g}\subseteq P$ nor $(y+b)K_{g}\subseteq P$, we have $(x+a)(y+b)\in P$ by Proposition \ref{Proposition 4.12}. But $xy, xb, ay\in P$, so $ab\in P$, a contradiction. Consequently, $A_{g}K_{g}\subseteq P$ or $B_{g}K_{g}\subseteq P$ or $A_{g}B_{g}\subseteq P$.
\end{proof}

\begin{lem}\label{Lemma 4.16 (1)} Let $P$ be a $g$-weakly $2$-absorbing ideal of $R$ and suppose that $(x, y, z)$ is a $g$-triple-zero of $P$ for some $x, y, z\in R_{g}$. Then $xR_{e}yP_{g}=\{0\}$.
\end{lem}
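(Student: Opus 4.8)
The plan is to proceed by contradiction, assuming $xR_eyP_g \neq \{0\}$ and using this to manufacture a configuration to which the $g$-weakly $2$-absorbing hypothesis applies, thereby forcing one of $xy$, $yz$, $xz$ into $P$ and contradicting the assumption that $(x,y,z)$ is a $g$-triple-zero.

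First I would record exactly what the hypothesis provides: $xR_eyR_ez = \{0\}$, while $xy, yz, xz \notin P$. Now suppose $xR_eyP_g \neq \{0\}$. Since this set is additively generated by the products $xryp$ with $r \in R_e$ and $p \in P_g$, there are $r \in R_e$ and $p \in P_g$ with $xryp \neq 0$. I would then set $w = z + p$, which lies in $R_g$ because $z \in R_g$ and $p \in P_g \subseteq R_g$. The key claim is that $0 \neq xR_eyR_ew \subseteq P$. The inclusion follows from $xR_eyR_ew \subseteq xR_eyR_ez + xR_eyR_ep$, where the first summand vanishes by hypothesis and the second is contained in $Rp \subseteq P$ since $P$ is an ideal and $p \in P$. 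For the non-vanishing, I would use that $R$ has a unity: $xryw = xry\cdot 1 \cdot w \in xR_eyR_ew$, and $xryw = xryz + xryp$ with $xryz \in xR_eyR_ez = \{0\}$, so $xryw = xryp \neq 0$.

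Finally, applying the definition of a $g$-weakly $2$-absorbing ideal to $x, y, w \in R_g$ with $0 \neq xR_eyR_ew \subseteq P$ yields $xy \in P$ or $yw \in P$ or $xw \in P$. The first possibility is excluded immediately by the triple-zero hypothesis. If $yw \in P$, then from $yw = yz + yp$ and $yp \in RP \subseteq P$ we obtain $yz \in P$; if $xw \in P$, then from $xw = xz + xp$ and $xp \in P$ we obtain $xz \in P$. Each case contradicts the definition of a $g$-triple-zero, so $xR_eyP_g = \{0\}$. The only step that requires any care is the non-vanishing half of the key claim, where the presence of the unity is precisely what allows one to kill the $z$-contribution and isolate the surviving term $xryp$; the rest is routine bookkeeping with the grading and the ideal property.
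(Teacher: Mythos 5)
Your proof is correct and follows essentially the same route as the paper's: assume $xR_eyP_g\neq\{0\}$, pick $r\in R_e$, $p\in P_g$ with $xryp\neq 0$, apply the $g$-weakly $2$-absorbing property to the triple $(x,y,z+p)$, and peel off the $p$-terms to contradict the triple-zero hypothesis. The only difference is that you spell out the inclusion $xR_eyR_e(z+p)\subseteq P$ and the exclusion of the case $xy\in P$, which the paper leaves implicit.
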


\begin{proof} Suppose that $xR_{e}yP_{g}\neq\{0\}$. Then there exist $r\in R_{e}$ and $p\in P_{g}$ such that $0\neq xryp$. Now, $xry(p+z)=xryp+xryz=xryp\neq0$. Hence, $0\neq xR_{e}yR_{e}(p+z)\subseteq P$. Since $P$ is $g$-weakly $2$-absorbing, we have that $x(p+z)\in P$ or $y(p+z)\in P$. Thus $xz\in P$ or $yz\in P$, a contradiction.
\end{proof}

\begin{lem}\label{Lemma 4.16 (2)} Let $P$ be a $g$-weakly $2$-absorbing ideal of $R$ and suppose that $(x, y, z)$ is a $g$-triple-zero of $P$ for some $x, y, z\in R_{g}$. Then $P_{g}yR_{e}z=\{0\}$.
\end{lem}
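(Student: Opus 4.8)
The plan is to mirror the proof of Lemma \ref{Lemma 4.16 (1)}, but working on the left side instead of the right. Suppose toward a contradiction that $P_{g}yR_{e}z\neq\{0\}$, so there exist $p\in P_{g}$ and $r\in R_{e}$ with $0\neq pryz$. The idea is to replace the ``$x$'' slot of the triple $(x,y,z)$ by $x+p$ and show that this yields a genuine nonzero product landing in $P$, which then forces one of $(x+p)y$, $yz$, $(x+p)z$ into $P$, each of which contradicts the $g$-triple-zero hypothesis on $(x,y,z)$.

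First I would compute the relevant product. Since $(x,y,z)$ is a $g$-triple-zero, $xR_{e}yR_{e}z=\{0\}$; in particular $x s y t z=0$ for all $s,t\in R_{e}$. Now consider $(x+p)R_{e}yR_{e}z$: a typical element is $(x+p)syt z = xsytz + psytz = psytz$, which lies in $P$ because $p\in P_{g}\subseteq P$ and $P$ is an ideal. So $(x+p)R_{e}yR_{e}z\subseteq P$. To see it is nonzero, I take $s=1$ (legitimate since $R$ has unity, $1\in R_{e}$) and the element $t$ witnessing $0\neq p r y z$, giving $(x+p)\cdot 1\cdot y\cdot r\cdot z = p r y z\neq 0$. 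Hence $0\neq (x+p)R_{e}yR_{e}z\subseteq P$.

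Since $P$ is $g$-weakly $2$-absorbing and $x+p, y, z\in R_{g}$, we conclude $(x+p)y\in P$ or $yz\in P$ or $(x+p)z\in P$. The case $yz\in P$ directly contradicts the $g$-triple-zero property of $(x,y,z)$. If $(x+p)y\in P$, then since $py\in P$ (as $p\in P_g\subseteq P$), we get $xy = (x+p)y - py\in P$, again contradicting the triple-zero property. Similarly $(x+p)z\in P$ together with $pz\in P$ forces $xz\in P$, a contradiction. Therefore $P_{g}yR_{e}z=\{0\}$.

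I do not expect a genuine obstacle here; the argument is a routine left-sided analogue of Lemma \ref{Lemma 4.16 (1)}. The only point requiring a little care is the nonzero-witness step: one must make sure that after substituting $x+p$ for $x$ the product is still nonzero, and this is exactly where the assumption $0\neq pryz$ is used together with $1\in R_{e}$ to fill the $R_e$-slot that originally sat between $x$ and $y$. One should also note that $P_{g}yR_{e}z$ being nonzero in the first place presupposes $P_{g}\neq\{0\}$, but the proof handles the trivial case automatically since if $P_g=\{0\}$ the conclusion is immediate.
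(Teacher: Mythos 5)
Your argument is correct and is essentially the paper's own proof: perturb the first slot to $x+p$, note that $(x+p)R_{e}yR_{e}z$ is a nonzero subset of $P$ because $xR_{e}yR_{e}z=\{0\}$ and $p\in P$, then apply the $g$-weakly $2$-absorbing property and peel off $py\in P$ (resp.\ $pz\in P$) to contradict the $g$-triple-zero hypothesis. One notational slip worth fixing: a nonzero element of $P_{g}yR_{e}z$ has the form $pyrz$ (with $r$ between $y$ and $z$), not $pryz$; your own computation $(x+p)\cdot 1\cdot y\cdot r\cdot z=pyrz\neq 0$ is the correct one, so only the stated witness needs to be rewritten.
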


\begin{proof} Suppose that $P_{g}yR_{e}z\neq\{0\}$. Then there exist $r\in R_{e}$ and $p\in P_{g}$ such that $0\neq pyrz$. Now, $(x+p)yrz=xyrz+pyrz=pyrz\neq0$. Hence, $0\neq(x+p)R_{e}yR_{e}z\subseteq P$. Since $P$ is $g$-weakly $2$-absorbing, we have that $(x+p)y\in P$ or $(x+p)z\in P$. Hence, $xy\in P$ or $xz\in P$, a contradiction.
\end{proof}

\begin{lem}\label{Lemma 4.16 (3)} Let $P$ be a $g$-weakly $2$-absorbing ideal of $R$ and suppose that $(x, y, z)$ is a $g$-triple-zero of $P$ for some $x, y, z\in R_{g}$. Then $xP_{g}z=\{0\}$.
\end{lem}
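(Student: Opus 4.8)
The plan is to run the same perturbation argument that proves Lemmas~\ref{Lemma 4.16 (1)} and~\ref{Lemma 4.16 (2)}, the only difference being that this time the \emph{middle} coordinate $y$ (rather than $z$ or $x$) gets shifted by an element of $P_g$. So I would argue by contradiction: suppose $xP_gz\neq\{0\}$ and fix $p\in P_g$ with $xpz\neq 0$. Since $P$ is a graded ideal, $p\in P_g\subseteq R_g$, and hence $y+p\in R_g$, so the triple $(x,y+p,z)$ is admissible for the definition of a $g$-weakly $2$-absorbing ideal.

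The crucial step is to exhibit a nonzero product lying inside $P$. Because $(x,y,z)$ is a $g$-triple-zero of $P$, we have $xyz=x\cdot 1\cdot y\cdot 1\cdot z\in xR_eyR_ez=\{0\}$, so $x(y+p)z=xyz+xpz=xpz\neq 0$; as $1\in R_e$, this element sits in $xR_e(y+p)R_ez$, which is therefore nonzero. On the other hand $xR_e(y+p)R_ez\subseteq xR_eyR_ez+xR_epR_ez=\{0\}+xR_epR_ez\subseteq xPz\subseteq P$, using only that $P$ is a (two-sided) ideal. Thus $0\neq xR_e(y+p)R_ez\subseteq P$ with $x,y+p,z\in R_g$.

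Now I would apply the defining property of a $g$-weakly $2$-absorbing ideal to $(x,y+p,z)$: at least one of $x(y+p)\in P$, $(y+p)z\in P$, $xz\in P$ holds. The last case is impossible since $(x,y,z)$ is a $g$-triple-zero. If $x(y+p)\in P$, then subtracting $xp\in P$ yields $xy\in P$, contradicting $xy\notin P$; if $(y+p)z\in P$, then subtracting $pz\in P$ yields $yz\in P$, contradicting $yz\notin P$. Every case is contradictory, so no such $p$ exists and $xP_gz=\{0\}$. The only point needing any care — and the one I would flag as the main (though mild) obstacle — is the verification that the shifted product $xR_e(y+p)R_ez$ is genuinely nonzero; this is exactly where the triple-zero hypothesis $xR_eyR_ez=\{0\}$ and the presence of the unity $1\in R_e$ enter. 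The remainder is routine bookkeeping with the ideal property of $P$ and the assumptions $xy,yz,xz\notin P$, so I anticipate no real difficulty.
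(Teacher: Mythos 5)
Your proposal is correct and follows essentially the same argument as the paper: perturb the middle entry to $y+p$, observe that $x(y+p)z=xpz\neq 0$ forces $0\neq xR_e(y+p)R_ez\subseteq P$, and then derive a contradiction from each of the three conclusions of the $g$-weakly $2$-absorbing property. Your extra care in checking that $xR_e(y+p)R_ez\subseteq P$ and that it is nonzero (via $1\in R_e$) is exactly the content the paper leaves implicit.
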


\begin{proof} Suppose that $xP_{g}z\neq\{0\}$. Then there exists $p\in P_{g}$ such that $0\neq xpz$. Now, $x(y+p)z=xyz+xpz=xpz\neq0$. Hence, $0\neq xR_{e}(y+p)R_{e}z\subseteq P$. Since $P$ is $g$-weakly $2$-absorbing, we have that $x(y+p)\in P$ or $(y+p)z\in P$. Hence, $xy\in P$ or $yz\in P$, a contradiction.
\end{proof}

\begin{lem}\label{Lemma 4.16 (4)} Let $P$ be a $g$-weakly $2$-absorbing ideal of $R$ and suppose that $(x, y, z)$ is a $g$-triple-zero of $P$ for some $x, y, z\in R_{g}$. Then $P_{g}^{2}z=\{0\}$.
\end{lem}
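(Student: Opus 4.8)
The plan is to imitate the proofs of Lemmas \ref{Lemma 4.16 (1)}--\ref{Lemma 4.16 (3)}: argue by contradiction, assume $P_g^2z\neq\{0\}$, pick witnesses $p,q\in P_g$ with $pqz\neq 0$, and perturb $x$ to $x+p$ and $y$ to $y+q$ in order to manufacture a triple that forces one of $xy,yz,xz$ into $P$, contradicting the fact that $(x,y,z)$ is a $g$-triple-zero. Note that $x+p,\,y+q,\,z\in R_g$ since $P_g\subseteq R_g$.

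First I would record two homogeneity facts coming from $P$ being a two-sided graded ideal and $R$ having unity $1\in R_e$: for $w\in P_g$ one has $wR_e\subseteq P_g$, $R_ew\subseteq P_g$, and $R_ewR_e\subseteq P_g$ (these products are homogeneous of degree $g$ and lie in $P$). Then I would expand
\[
(x+p)R_e(y+q)R_ez = xR_eyR_ez + xR_eqR_ez + pR_eyR_ez + pR_eqR_ez .
\]
The first summand is $\{0\}$ because $(x,y,z)$ is a $g$-triple-zero. The second lies in $xP_gz=\{0\}$ by Lemma \ref{Lemma 4.16 (3)} (using $R_eqR_e\subseteq P_g$). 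The third lies in $P_gyR_ez=\{0\}$ by Lemma \ref{Lemma 4.16 (2)} (using $pR_e\subseteq P_g$). Hence $(x+p)R_e(y+q)R_ez=pR_eqR_ez$, which contains $p\cdot 1\cdot q\cdot 1\cdot z=pqz\neq 0$ and is contained in $P$ because $p\in P$. So $0\neq (x+p)R_e(y+q)R_ez\subseteq P$, and the $g$-weakly $2$-absorbing property applied to $(x+p,y+q,z)$ yields $(x+p)(y+q)\in P$ or $(y+q)z\in P$ or $(x+p)z\in P$. Since $p,q\in P$, in the first case $xq,py,pq\in P$ force $xy\in P$; in the second $qz\in P$ forces $yz\in P$; in the third $pz\in P$ forces $xz\in P$. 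Each contradicts the definition of a $g$-triple-zero, so $P_g^2z=\{0\}$.

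The only delicate point — the ``hard part,'' such as it is — is the collapse of the four-term expansion: one must be careful that the three vanishing summands are genuinely captured by Lemmas \ref{Lemma 4.16 (2)} and \ref{Lemma 4.16 (3)} together with the observations $R_eqR_e\subseteq P_g$ and $pR_e\subseteq P_g$, and that the surviving summand $pR_eqR_ez$ is actually nonzero, which is precisely where the hypothesis $1\in R_e$ enters (so that $pqz$ literally belongs to $pR_eqR_ez$). Everything else is bookkeeping with the fact that $P$ is a two-sided ideal.
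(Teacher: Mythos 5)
Your proof is correct and follows essentially the same route as the paper's: perturb $x\mapsto x+p$, $y\mapsto y+q$, use Lemmas \ref{Lemma 4.16 (2)} and \ref{Lemma 4.16 (3)} to kill the cross terms, and apply the $g$-weakly $2$-absorbing property to the triple $(x+p,y+q,z)$. Your version is in fact slightly more careful than the paper's, which only checks that the single element $(x+p)(y+q)z$ equals $pqz$ rather than collapsing the full set $(x+p)R_e(y+q)R_ez$, but the substance is identical.
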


\begin{proof} Suppose that $P_{g}^{2}z\neq\{0\}$. Then there exist $p, q\in P_{g}$ such that $0\neq pqz$ .Now, $(x+p)(y+q)z=xyz+xqz+pyz+pqz=pqz\neq0$ by Lemma \ref{Lemma 4.16 (2)} and Lemma \ref{Lemma 4.16 (3)}. Hence, $0\neq(x+p)R_{e}(y+q)R_{e}z\subseteq P$. Since $P$ is $g$-weakly $2$-absorbing, we have that $(x+p)z\in P$ or $(y+q)z\in P$ or $(x+p)(y+q)\in P$. Hence, $xz\in P$ or $yz\in P$ or $xy\in P$, a contradiction.
\end{proof}

\begin{lem}\label{Lemma 4.16 (5)} Let $P$ be a $g$-weakly $2$-absorbing ideal of $R$ and suppose that $(x, y, z)$ is a $g$-triple-zero of $P$ for some $x, y, z\in R_{g}$. Then $xP_{g}^{2}=\{0\}$.
\end{lem}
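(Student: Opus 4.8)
The plan is to mimic the proofs of Lemmas \ref{Lemma 4.16 (1)}--\ref{Lemma 4.16 (4)}: assume for contradiction that $xP_{g}^{2}\neq\{0\}$, pick witnesses $p,q\in P_{g}$ with $0\neq xpq$, perturb the triple $(x,y,z)$ by these witnesses so as to land inside $P$ while keeping the product nonzero, and then invoke the $g$-weakly $2$-absorbing property to derive that one of $xy$, $yz$, $xz$ lies in $P$, contradicting the hypothesis that $(x,y,z)$ is a $g$-triple-zero.

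Concretely, I would perturb $y$ by $p$ and $z$ by $q$, forming the triple $(x, y+p, z+q)$. Expanding $x R_e (y+p) R_e (z+q)$, the cross terms are: $x R_e y R_e q \subseteq x R_e y P_g = \{0\}$ by Lemma \ref{Lemma 4.16 (1)}; $x R_e p R_e z \subseteq x P_g R_e z$, which should vanish --- this is the term I need to handle, see below; and the pure term $x R_e y R_e z = \{0\}$ since $(x,y,z)$ is a $g$-triple-zero. What remains is $x R_e p R_e q$, and since $0\neq xpq$ lies in this set (taking the unit from $R_e$ twice, as $R$ has a unity in $R_e$), we get $0\neq x R_e (y+p) R_e (z+q)\subseteq P$ (note $(y+p)\in R_g$ and $(z+q)\in R_g$, and the product lies in $P$ since $y,z\in R_g$ while $p,q\in P_g\subseteq P$ and $P$ is an ideal). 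Then the $g$-weakly $2$-absorbing property gives $x(y+p)\in P$ or $(y+p)(z+q)\in P$ or $x(z+q)\in P$, i.e., $xy+xp\in P$ or $(y+p)(z+q)\in P$ or $xz+xq\in P$. Since $xp, xq\in P_g\subseteq P$, the first and third cases collapse to $xy\in P$ and $xz\in P$ respectively, contradictions. For the middle case, $(y+p)(z+q) = yz + yq + pz + pq$; here $yz\notin P$ is what we want to contradict, and $yq, pz, pq\in P$ since $p,q\in P_g$, so $yz\in P$, again a contradiction.

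The one step that needs care is the vanishing of the cross term $x R_e p R_e z$. This is not literally one of the preceding lemmas, but it should follow from the same type of argument: if $0\neq xrpsz$ for some $r,s\in R_e$ and $p\in P_g$, then replacing $y$ by $p$ gives $0\neq x R_e p R_e z \subseteq P$ (since $p\in P_g\subseteq P$), whence $xp\in P$ or $pz\in P$ or $xz\in P$ by $g$-weak $2$-absorption --- the first two hold trivially, the third is a contradiction. Actually this shows $xz\in P$ outright unless $xR_epR_ez=\{0\}$, so the cross term must vanish. Alternatively, one can avoid this term entirely by perturbing differently: form $(x, y+p, q\cdot z)$ is not homogeneous-friendly, so instead I would perturb only $y$, writing $(x, y+p, z)$ won't produce $xpq$. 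The cleanest route is the symmetric analogue of Lemma \ref{Lemma 4.16 (4)}: just as that lemma perturbs $x\to x+p$, $y\to y+q$ and kills cross terms via Lemmas \ref{Lemma 4.16 (2)} and \ref{Lemma 4.16 (3)}, here perturbing $y\to y+p$, $z\to z+q$ should kill cross terms via Lemmas \ref{Lemma 4.16 (1)} and \ref{Lemma 4.16 (3)} (the latter giving $xP_gz=\{0\}$, hence $xR_epR_ez\subseteq xP_gz=\{0\}$ after absorbing the units of $R_e$). So the obstacle is really just bookkeeping: making sure each of the three cross terms in the expansion is covered by one of the four prior lemmas, which they are --- $xR_eyR_eq$ by Lemma \ref{Lemma 4.16 (1)}, $xR_epR_ez$ by Lemma \ref{Lemma 4.16 (3)}, and $xR_eyR_ez$ by the triple-zero hypothesis.
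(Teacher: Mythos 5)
Your proposal is correct and matches the paper's proof: the paper perturbs exactly as you do, forming $x(y+p)(z+q)$ and killing the cross terms $xyq$ and $xpz$ via Lemma \ref{Lemma 4.16 (1)} and Lemma \ref{Lemma 4.16 (3)} respectively, then applies the $g$-weakly $2$-absorbing property to $0\neq xR_{e}(y+p)R_{e}(z+q)\subseteq P$. Your worry about the cross term $xR_{e}pR_{e}z$ resolves just as you say, since $R_{e}P_{g}R_{e}\subseteq P_{g}$ for a graded ideal, so it sits inside $xP_{g}z=\{0\}$.
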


\begin{proof} Suppose that  $xP_{g}^{2}\neq\{0\}$. Then there exist $p, q\in P_{g}$ such that $0\neq xpq$. Now, $x(y+p)(z+q)=xyz+xyq+xpz+xpq=xpq\neq0$ by Lemma \ref{Lemma 4.16 (1)} and Lemma \ref{Lemma 4.16 (3)}. Hence, $0\neq xR_{e}(y+p)R_{e}(z+q)\subseteq P$. Since $P$ is $g$-weakly $2$-absorbing, we have that $x(y+p)\in P$ or $x(z+q)\in P$ or $(y+p)(z+q)\in P$. Hence, $xy\in P$ or $xz\in P$ or $yz\in P$, a contradiction.
\end{proof}

\begin{lem}\label{Lemma 4.16 (6)} Let $P$ be a $g$-weakly $2$-absorbing ideal of $R$ and suppose that $(x, y, z)$ is a $g$-triple-zero of $P$ for some $x, y, z\in R_{g}$. Then $P_{g}yP_{g}=\{0\}$.
\end{lem}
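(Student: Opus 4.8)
The plan is to argue by contradiction, following verbatim the perturbation template already used in Lemmas \ref{Lemma 4.16 (1)}--\ref{Lemma 4.16 (5)}. Suppose $P_{g}yP_{g}\neq\{0\}$; then there are $p, q\in P_{g}$ with $pyq\neq0$. The idea is to keep the middle entry $y$ of the $g$-triple-zero $(x,y,z)$ fixed and perturb the outer entries to $x+p$ and $z+q$, then show that the new triple $(x+p,\, y,\, z+q)$ satisfies $0\neq(x+p)R_{e}yR_{e}(z+q)\subseteq P$. The $g$-weakly $2$-absorbing hypothesis will then force one of $(x+p)y$, $y(z+q)$, $(x+p)(z+q)$ into $P$, and each of these collapses (after discarding the $P_{g}$-summands, which lie in $P$) to $xy\in P$, $yz\in P$, or $xz\in P$, contradicting that $(x,y,z)$ is a $g$-triple-zero.

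First I would verify the containment. Expanding a generic element $(x+p)rys(z+q)$ with $r,s\in R_{e}$ gives the four summands $xrysz+xrysq+prysz+prysq$. Here $xrysz=0$ since $xR_{e}yR_{e}z=0$; the summand $xrysq$ lies in $xR_{e}yP_{g}$ because $sq\in R_{e}P_{g}\subseteq P_{g}$, so it vanishes by Lemma \ref{Lemma 4.16 (1)}; the summand $prysz$ lies in $P_{g}yR_{e}z$ because $pr\in P_{g}R_{e}\subseteq P_{g}$, so it vanishes by Lemma \ref{Lemma 4.16 (2)}; and $prysq\in P$ since $pr\in P_{g}\subseteq P$ and $P$ is an ideal. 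Hence $(x+p)R_{e}yR_{e}(z+q)\subseteq P$. For nonvanishing, take $r=s=1\in R_{e}$: the same expansion reduces to $(x+p)y(z+q)=xyz+xyq+pyz+pyq=pyq\neq0$, using $xyz=0$, $xyq\in xR_{e}yP_{g}=\{0\}$, and $pyz\in P_{g}yR_{e}z=\{0\}$ by Lemmas \ref{Lemma 4.16 (1)} and \ref{Lemma 4.16 (2)}.

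Since $x+p,\, y,\, z+q\in R_{g}$ and $0\neq(x+p)R_{e}yR_{e}(z+q)\subseteq P$, the definition of a $g$-weakly $2$-absorbing ideal yields $(x+p)y\in P$ or $y(z+q)\in P$ or $(x+p)(z+q)\in P$. In the first case $py\in P$ gives $xy\in P$; in the second $yq\in P$ gives $yz\in P$; in the third $xq,\,pz,\,pq\in P$ give $xz\in P$. Each contradicts the triple-zero assumption, so $P_{g}yP_{g}=\{0\}$. The only delicate point is the bookkeeping with the homogeneous component $P_{g}$: one must repeatedly use that $P$ is a \emph{graded} ideal, so that $R_{e}P_{g},\,P_{g}R_{e}\subseteq P\cap R_{g}=P_{g}$, which is exactly what places the cross terms inside the zero-sets furnished by Lemmas \ref{Lemma 4.16 (1)} and \ref{Lemma 4.16 (2)}; beyond this routine check there is no real obstacle.
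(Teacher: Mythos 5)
Your proof is correct and follows essentially the same route as the paper's: perturb the triple-zero to $(x+p,\,y,\,z+q)$, use Lemmas \ref{Lemma 4.16 (1)} and \ref{Lemma 4.16 (2)} to reduce $(x+p)y(z+q)$ to $pyq\neq 0$, and apply the $g$-weakly $2$-absorbing property. You are in fact somewhat more careful than the paper, which asserts the containment $(x+p)R_{e}yR_{e}(z+q)\subseteq P$ without the term-by-term check you supply.
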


\begin{proof} Suppose that $P_{g}yP_{g}\neq\{0\}$. Then there exist $p, q\in P_{g}$ such that $0\neq pyq$. Now, $(x+p)y(z+q)=xyz+xyq+pyz+pyq=pyq\neq0$ Lemma \ref{Lemma 4.16 (1)} and Lemma \ref{Lemma 4.16 (2)}. Hence, $0\neq(x+p)R_{e}yR_{e}(z+q)\subseteq P$. Since $P$ is $g$-weakly $2$-absorbing, we have that $(x+p)y\in P$ or $y(z+q)\in P$ or $(x+p)(z+q)\in P$. Hence, $xy\in P$ or $yz\in P$ or $xz\in P$, a contradiction.
\end{proof}

The next theorem is a consequence result from Lemma \ref{Lemma 4.16 (1)} - Lemma \ref{Lemma 4.16 (6)}.

\begin{thm}\label{Theorem 4.17} Let $R$ be a $G$-graded ring, $g\in G$ and $P$ be a ideal of $R$ such that $P_{g}^{3}\neq\{0\}$. Then $P$ is a $g$-weakly $2$-absorbing ideal of $R$ if and only if $P$ is a $g$-2-absorbing ideal of $R$.
\end{thm}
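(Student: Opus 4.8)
The reverse implication will be immediate: both definitions require $P_g\neq R_g$, and if $P$ is $g$-$2$-absorbing then it satisfies the $g$-weakly $2$-absorbing condition \emph{a fortiori}, since the hypothesis $0\neq xR_eyR_ez\subseteq P$ already implies $xR_eyR_ez\subseteq P$. So the real content is the forward implication, and the plan is to argue by contradiction. Assume $P$ is $g$-weakly $2$-absorbing but \emph{not} $g$-$2$-absorbing. By the remark following the definition of a $g$-triple-zero, $P$ then admits a $g$-triple-zero $(x,y,z)$: namely $x,y,z\in R_g$ with $xR_eyR_ez=\{0\}$, $xy\notin P$, $yz\notin P$ and $xz\notin P$. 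The goal is to contradict $P_g^3\neq\{0\}$ by perturbing $(x,y,z)$ inside $P_g$.

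The heart of the argument is one computation. Fix $p,q,r\in P_g$ and $s,t\in R_e$, and expand $(x+p)s(y+q)t(z+r)=(xs+ps)(yt+qt)(z+r)$ into its eight products. Using the elementary closure facts $R_eP_g\subseteq P_g$ and $P_gR_e\subseteq P_g$ (valid because $P_g=P\cap R_g$ and $P$ is a two-sided ideal), each of the seven "mixed" products lands in a set annihilated by one of the preceding lemmas: $xsytz\in xR_eyR_ez=\{0\}$ by the triple-zero hypothesis; $xsytr\in xR_eyP_g=\{0\}$ by Lemma \ref{Lemma 4.16 (1)}; $xsqtz\in xP_gz=\{0\}$ by Lemma \ref{Lemma 4.16 (3)}; $psytz\in P_gyR_ez=\{0\}$ by Lemma \ref{Lemma 4.16 (2)}; $xsqtr\in xP_g^2=\{0\}$ by Lemma \ref{Lemma 4.16 (5)}; $psqtz\in P_g^2z=\{0\}$ by Lemma \ref{Lemma 4.16 (4)}; and $psytr\in P_gyP_g=\{0\}$ by Lemma \ref{Lemma 4.16 (6)}. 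Only $psqtr\in P_g^3$ survives. Hence $(x+p)R_e(y+q)R_e(z+r)\subseteq P_g^3\subseteq P$, and taking $s=t=1$ (recall $1\in R_e$) gives $(x+p)(y+q)(z+r)=pqr$.

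Now invoke the hypothesis: since $P_g^3\neq\{0\}$, there exist $p,q,r\in P_g$ with $pqr\neq 0$, and therefore $0\neq(x+p)R_e(y+q)R_e(z+r)\subseteq P$. As $P$ is $g$-weakly $2$-absorbing and $x+p,y+q,z+r\in R_g$, one of $(x+p)(y+q)$, $(y+q)(z+r)$, $(x+p)(z+r)$ lies in $P$. In each case the remaining three terms of the relevant product—for instance $xq$, $py$, $pq$ when $(x+p)(y+q)\in P$—lie in $P$ because $P$ is a two-sided ideal containing $P_g$; hence $xy\in P$, respectively $yz\in P$, respectively $xz\in P$, contradicting that $(x,y,z)$ is a $g$-triple-zero. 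This contradiction shows $P$ has no $g$-triple-zero, so $P$ is $g$-$2$-absorbing, completing the forward direction.

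I expect the only genuine obstacle to be bookkeeping: writing the eight-term expansion cleanly and correctly matching each of the seven vanishing terms to the right lemma, while tracking that intermediate products such as $sq$, $tr$, $ps$, $sqt$, $psqt$ stay homogeneous of the expected degree—this is precisely where $R_eP_g\subseteq P_g$ and $P_gR_e\subseteq P_g$ enter. The rest—the reverse implication, the step from $P_g^3\neq\{0\}$ to a nonzero product $pqr$ of elements of $P_g$, and the three symmetric contradiction cases—is routine.
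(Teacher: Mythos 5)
Your proposal is correct and follows essentially the same route as the paper: take a $g$-triple-zero $(x,y,z)$, pick $p,q,r\in P_g$ with $pqr\neq 0$, and use Lemmas \ref{Lemma 4.16 (1)}--\ref{Lemma 4.16 (6)} to see that $(x+p)R_e(y+q)R_e(z+r)$ collapses to the nonzero set of products $psqtr\subseteq P$, forcing a contradiction with the triple-zero property. The paper's proof is just a terser version of the same argument; your explicit eight-term expansion with general $s,t\in R_e$ is the bookkeeping the paper leaves implicit.
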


\begin{proof} Suppose that $P$ is a $g$-weakly $2$-absorbing ideal which is not a $g$-$2$-absorbing ideal of $R$. Then $P$ has a $g$-triple-zero, say $(x, y, z)$ for some $x, y, z\in R_{g}$. Since $P_{g}^{3}\neq\{0\}$, there exist $p, q, r\in P_{g}$ such that $pqr\neq0$, and then $(x+p)(y+q)(z+r)=pqr\neq0$. Hence, $0\neq(x+p)R_{e}(y+q)R_{e}(z+r)\subseteq P$. Since $P$ is $g$-weakly $2$-absorbing, we have either $(x+p)(y+q)\in P$ or $(x+p)(z+r)\in P$ or $(y+q)(z+r)\in P$, and thus either $xy\in P$ or $xz\in P$ or $yz\in P$, a contradiction. Hence, $P$ is a $g$-$2$-absorbing ideal of $R$. The converse is obvious.
\end{proof}

\begin{cor}\label{Corollary 4.18} If $P$ is a $g$-weakly $2$-absorbing ideal of $R$ that is not a $g$-$2$-absorbing ideal, then $P_{g}^{3}=\{0\}$.
\end{cor}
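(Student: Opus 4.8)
The plan is to read this off as the contrapositive of Theorem \ref{Theorem 4.17}. Observe first that an ideal $P$ which is $g$-weakly $2$-absorbing automatically satisfies $P_g \neq R_g$ (this is part of the definition), so the hypotheses needed to speak of ``$g$-$2$-absorbing'' and to invoke Theorem \ref{Theorem 4.17} are in force. Now suppose $P$ is $g$-weakly $2$-absorbing but not $g$-$2$-absorbing. If we had $P_g^3 \neq \{0\}$, then Theorem \ref{Theorem 4.17} would give that $P$ is $g$-$2$-absorbing, contradicting our assumption. Hence $P_g^3 = \{0\}$, as claimed. That is the entire argument; all the substance has already been packaged into Theorem \ref{Theorem 4.17}.

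Should one prefer a self-contained argument, I would reprove only the single implication that is needed, which is exactly the forward direction inside the proof of Theorem \ref{Theorem 4.17}. Since $P$ is $g$-weakly $2$-absorbing but not $g$-$2$-absorbing, it has a $g$-triple-zero $(x, y, z)$ with $x, y, z \in R_g$. Assume for contradiction that $P_g^3 \neq \{0\}$ and choose $p, q, r \in P_g$ with $pqr \neq 0$. Expanding $(x+p)(y+q)(z+r)$ into its eight monomials, the term $xyz$ vanishes because $(x, y, z)$ is a $g$-triple-zero, and the six mixed monomials $xyr$, $xqz$, $pyz$, $xqr$, $pyr$, $pqz$ are annihilated, in turn, by Lemma \ref{Lemma 4.16 (1)}, Lemma \ref{Lemma 4.16 (3)}, Lemma \ref{Lemma 4.16 (2)}, Lemma \ref{Lemma 4.16 (5)}, Lemma \ref{Lemma 4.16 (6)} and Lemma \ref{Lemma 4.16 (4)} respectively (each used with the relevant $R_e$-factor taken to be $1$). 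Thus $(x+p)(y+q)(z+r) = pqr \neq 0$. Moreover every element of $(x+p)R_e(y+q)R_e(z+r)$ that involves at least one of $p, q, r$ lies in $P$ because $P$ is a two-sided ideal, while the only remaining contribution is $xR_e y R_e z = \{0\}$; hence $0 \neq (x+p)R_e(y+q)R_e(z+r) \subseteq P$. Applying the $g$-weakly $2$-absorbing property of $P$ and then discarding the $P$-terms from each of $(x+p)(y+q)$, $(x+p)(z+r)$, $(y+q)(z+r)$ forces $xy \in P$ or $xz \in P$ or $yz \in P$, contradicting that $(x, y, z)$ is a $g$-triple-zero. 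Therefore $P_g^3 = \{0\}$.

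There is no genuine obstacle here: all the work lives in Lemmas \ref{Lemma 4.16 (1)}--\ref{Lemma 4.16 (6)} and in Theorem \ref{Theorem 4.17}, and the corollary is a one-line deduction from the former. The only point that needs mild care is the bookkeeping of matching each of the six mixed monomials to the lemma that kills it, together with the harmless remark that the condition $P_g \neq R_g$ required to even phrase $g$-$2$-absorbing (and hence to use Theorem \ref{Theorem 4.17}) is already built into the notion of a $g$-weakly $2$-absorbing ideal.
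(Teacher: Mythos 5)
Your proposal is correct and matches the paper exactly: the corollary is just the contrapositive of Theorem \ref{Theorem 4.17}, which is all the paper intends (it states the corollary without further proof). Your optional self-contained argument also faithfully reproduces the proof of Theorem \ref{Theorem 4.17}, with the six mixed monomials correctly matched to Lemmas \ref{Lemma 4.16 (1)}--\ref{Lemma 4.16 (6)}.
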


Let $R\ $be a $G$-graded ring. Then an $R$-module $M$ is said to be
$G$\textit{-graded} if $M=\bigoplus_{g\in G}M_{g}$ with $R_{g}M_{h}\subseteq
M_{gh}$ for all $g,h\in G$, where $M_{g}$ is an additive subgroup of $M$ for
all $g\in G$. The elements of $M_{g}$ are called homogeneous of degree $g$. It
is clear that $M_{g}$ is an $R_{e}$-submodule of $M$ for all $g\in G$. We
assume that $h(M)=\bigcup_{g\in G}M_{g}$. Let $N$ be an $R$-submodule of a
graded $R$-module $M$. Then $N$ is said to be graded $R$-submodule if
$N=\bigoplus_{g\in G}(N\cap M_{g})$, or equivalently, $x=\sum_{g\in G}x_{g}\in
N\ $implies that $x_{g}\in N$ for all $g\in G$. It is known that an
$R$-submodule of a graded $R$-module need not be graded. For more terminology
see \cite{Hazart, Nastasescue}.

Let $M$ be an $R$-bi-module. The idealization $R\ltimes M=\{(r,m):r\in R,m\in
M\}$ of $M$ is a ring with componentwise addition and
multiplication: $(x,m_{1})+(y,m_{2})=(x+y,m_{1}+m_{2})$ and $(x,m_{1}%
)(y,m_{2})=(xy,xm_{2}+m_{1}y)$ for each $x,y\in R$ and $m_{1},m_{2}\in M$. Let
$G$ be an Abelian group and $M$ be a $G$-graded $R$-module. Then $X=R\ltimes
M$ is $G$-graded by $X_{g}=R_{g}\bigoplus M_{g}$ for all $g\in G$ \cite{RaTeShKo}.

\begin{thm}\label{Theorem 5.1} Let $R$ be a graded ring with unity, $M$ be a graded $R$-bi-module and $P$ be a proper graded ideal of $R$. Then $P\ltimes M$ is a graded $2$-absorbing ideal of $R\ltimes M$ if and only if $P$ is a graded 2-absorbing ideal of $R$.
\end{thm}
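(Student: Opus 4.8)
The plan is to argue directly from the definition of graded $2$-absorbing, exploiting two elementary features of the idealization $X:=R\ltimes M$ (graded by $X_g=R_g\oplus M_g$): first, the $R$-coordinate of any product $(a_1,m_1)\cdots(a_k,m_k)$ in $X$ equals the product $a_1\cdots a_k$ of the $R$-coordinates; and second, an element $(a,m)\in X$ lies in $P\ltimes M$ if and only if $a\in P$. Before doing either implication I would record the preliminary (routine) facts that $P\ltimes M$ is a proper graded ideal of $X$ with $(P\ltimes M)_g=P_g\oplus M_g$ — it is an ideal because $P$ is a two-sided ideal and $M$ is an $R$-bimodule, proper because $P\neq R$, and graded because $P$ is graded and $M$ is a graded module — and that every homogeneous element of $X$ has the form $(a,m)$ with $a\in R_g$ and $m\in M_g$, so in particular its $R$-coordinate lies in $h(R)$.

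For the ``if'' direction I assume $P$ is graded $2$-absorbing and take homogeneous $(a,m_1),(b,m_2),(c,m_3)\in h(X)$ with $(a,m_1)X(b,m_2)X(c,m_3)\subseteq P\ltimes M$. Then $a,b,c\in h(R)$, and for arbitrary $r,s\in R$ the product $(a,m_1)(r,0)(b,m_2)(s,0)(c,m_3)$ has $R$-coordinate $arbsc$, which therefore lies in $P$; hence $aRbRc\subseteq P$. By hypothesis $ab\in P$ or $bc\in P$ or $ac\in P$, and since $(a,m_1)(b,m_2)$ has $R$-coordinate $ab$ (and similarly for the other pairs) while membership in $P\ltimes M$ depends only on the $R$-coordinate, one of the three pairwise products lies in $P\ltimes M$, as required.

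For the ``only if'' direction I assume $P\ltimes M$ is graded $2$-absorbing and take $x,y,z\in h(R)$ with $xRyRz\subseteq P$. Then $(x,0),(y,0),(z,0)\in h(X)$, and a typical generator $(x,0)(r_1,n_1)(y,0)(r_2,n_2)(z,0)$ of $(x,0)X(y,0)X(z,0)$ has $R$-coordinate $xr_1yr_2z\in xRyRz\subseteq P$, hence lies in $P\ltimes M$; so $(x,0)X(y,0)X(z,0)\subseteq P\ltimes M$. By hypothesis $(xy,0)\in P\ltimes M$ or $(yz,0)\in P\ltimes M$ or $(xz,0)\in P\ltimes M$, i.e.\ $xy\in P$ or $yz\in P$ or $xz\in P$, which is exactly what it means for $P$ to be graded $2$-absorbing.

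There is no deep obstacle here; the argument is bookkeeping with the idealization multiplication. The one point needing care is ensuring that \emph{enough} test elements of $X$ are used in each direction: in the ``if'' direction one must insert the elements $(r,0)$ between the three factors so as to recover the full condition $aRbRc\subseteq P$ rather than merely $abc\in P$, and in the ``only if'' direction one must check that for \emph{arbitrary} pairs $(r_i,n_i)\in X$ the $R$-coordinate of the product still lands in $P$ — which it does, precisely because $xRyRz\subseteq P$ involves all of $R$. A secondary, purely preparatory, point is confirming that $P\ltimes M$ is a proper graded ideal of $R\ltimes M$ so that the statement is well posed.
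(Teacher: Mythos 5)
Your proof is correct and follows essentially the same route as the paper's: test with elements of the form $(x,0)$ (resp.\ read off the $R$-coordinates of homogeneous elements), use that the $R$-coordinate of a product in $R\ltimes M$ is the product of the $R$-coordinates, and that membership in $P\ltimes M$ depends only on the $R$-coordinate. You are in fact somewhat more explicit than the paper at the two bookkeeping steps you flag (inserting $(r,0),(s,0)$ to recover $aRbRc\subseteq P$, and checking arbitrary middle factors in the other direction), which the paper passes over silently.
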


\begin{proof} Assume that $P\ltimes M$ is a graded $2$-absorbing ideal of $R\ltimes M$ and $xRyRz\subseteq P$ for some $x, y, z\in h(R)$. Then $(x, 0), (y, 0), (z, 0)\in h(R\ltimes M)$ with $(x, 0)R\ltimes M(y, 0)R\ltimes M(z, 0)\subseteq P\ltimes M$, and then $(x, 0)(y, 0)=(xy, 0)\subseteq P\ltimes M$ or $(x, 0)(z, 0)=(xz, 0)\subseteq P\ltimes M$ or $(y, 0)(z, 0)=(yz, 0)\subseteq P\ltimes M$. Hence, $xy\in P$ or $xz\in P$ or $yz\in P$, as required. Conversely, let $(x,m)R\ltimes M(y, n)R\ltimes M(z, p)\subseteq P\ltimes M$ for some $(x,m), (y, n), (z, p)\in h(R\ltimes M)$. Hence, $x, y, z\in h(R)$ with $xRyRz\subseteq P$, and then we have $xy\in P$ or $xz\in P$ or $yz\in P$. If $xy\in P$, then $(x,m)(y, n)=(xy, xn+ym)\subseteq P\ltimes M$. Similarly, if $xz\in P$, then $(x, m)(z, p)\in P\ltimes M$, and if $yz\in P$, then $(y, n)(z, p)\in P\ltimes M$, and this completes the proof.
\end{proof}

\begin{thm}\label{Theorem 5.2 (1)} Let $R$ be a graded ring with unity, $M$ be a graded $R$-bi-module and $P$ be a proper graded ideal of $R$. If $P\ltimes M$ is a graded weakly $2$-absorbing ideal of $R\ltimes M$, then $P$ is a graded weakly 2-absorbing ideal of $R$.
\end{thm}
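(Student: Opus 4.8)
The plan is to follow the strategy of the first half of the proof of Theorem~\ref{Theorem 5.1}, lifting homogeneous elements of $R$ to homogeneous elements of $R\ltimes M$ with zero second coordinate, but this time keeping careful track of the non-vanishing condition so as to be able to apply the weak $2$-absorbing hypothesis. First I would fix $x, y, z \in h(R)$ with $0 \neq xRyRz \subseteq P$ and set $\bar x = (x,0)$, $\bar y = (y,0)$, $\bar z = (z,0)$. Since $0 \in M_{g}$ for every $g \in G$, each of $\bar x, \bar y, \bar z$ is homogeneous in $R\ltimes M$ of the same degree as the corresponding element of $R$, so $\bar x, \bar y, \bar z \in h(R\ltimes M)$.

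Next I would check that $\bar x(R\ltimes M)\bar y(R\ltimes M)\bar z \subseteq P\ltimes M$. Multiplying out a generic element $(x,0)(r_{1},m_{1})(y,0)(r_{2},m_{2})(z,0)$ with the idealization product $(a,m_{1})(b,m_{2}) = (ab,\, am_{2} + m_{1}b)$, the first coordinate comes out as $xr_{1}yr_{2}z \in xRyRz \subseteq P$, while the second coordinate is merely some element of $M$; hence the product lies in $P\ltimes M$. For the non-vanishing, because $xRyRz \neq 0$ there exist $r_{1}, r_{2} \in R$ with $xr_{1}yr_{2}z \neq 0$, and then $(x,0)(r_{1},0)(y,0)(r_{2},0)(z,0) = (xr_{1}yr_{2}z,0) \neq (0,0)$. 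Thus $0 \neq \bar x(R\ltimes M)\bar y(R\ltimes M)\bar z \subseteq P\ltimes M$.

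Now I would invoke the hypothesis that $P\ltimes M$ is graded weakly $2$-absorbing to obtain $\bar x\bar y \in P\ltimes M$ or $\bar y\bar z \in P\ltimes M$ or $\bar x\bar z \in P\ltimes M$. Since $\bar x\bar y = (xy,0)$, $\bar y\bar z = (yz,0)$ and $\bar x\bar z = (xz,0)$, this translates into $xy \in P$ or $yz \in P$ or $xz \in P$. As $P$ is a proper graded ideal by hypothesis, this establishes that $P$ is a graded weakly $2$-absorbing ideal of $R$. The argument is essentially routine; the only point needing attention — and it is minor — is carrying out the second-coordinate computation correctly in the non-commutative idealization so that membership in $P\ltimes M$ is seen to depend only on the first coordinate, together with not overlooking the extra non-vanishing bookkeeping that the weak version requires over Theorem~\ref{Theorem 5.1}.
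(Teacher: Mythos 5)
Your proposal is correct and follows essentially the same route as the paper's proof: lift $x,y,z$ to $(x,0),(y,0),(z,0)$, verify that $(0,0)\neq (x,0)(R\ltimes M)(y,0)(R\ltimes M)(z,0)\subseteq P\ltimes M$, and apply the hypothesis. The paper states these containments without the explicit second-coordinate and non-vanishing checks you carry out, but the argument is identical.
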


\begin{proof} Let $0\neq xRyRz\subseteq P$, where $x, y, z\in h(R)$. Then $(0, 0)\neq(x, 0)R\ltimes M(y, 0)R\ltimes M(z, 0)\subseteq P\ltimes M$, and then $(xy, 0)\in P\ltimes M$ or $(xz, 0)\in P\ltimes M$ or $(yz, 0)\in P\ltimes M$. Hence, $xy\in P$ or $xz\in P$ or $yz\in P$. So, $P$ is graded weakly $2$-absorbing.
\end{proof}

\begin{thm}\label{Theorem 5.2 (2)} Let $R$ be a $G$-graded ring with unity, $M$ be a graded $R$-bi-module, $g\in G$ and $P$ be a graded ideal of $R$ with $P_{g}\neq R_{g}$. Then $P\ltimes M$ is a $g$-weakly $2$-absorbing ideal of $R\ltimes M$ if and only if $P$ is a $g$-weakly 2-absorbing ideal of $R$ and for any $g$-triple zero $(x, y, z)$ of $P$ we have $xR_{e}yR_{e}M_{g}=M_{g}R_{e}yR_{e}z=xM_{g}z=0$.
\end{thm}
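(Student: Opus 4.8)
The plan is to work throughout with the concrete model $X:=R\ltimes M$, in which $X_g=R_g\oplus M_g$ and $X_e=R_e\oplus M_e$, so that a homogeneous element of $X_g$ is a pair $(a,s)$ with $a\in R_g$ and $s\in M_g$, while $(a,s)\in P\ltimes M$ exactly when $a\in P$. The one computation I reuse is the expansion
\[
(x,m_1)(r,n)(y,m_2)(r',n')(z,m_3)=\bigl(xryr'z,\;xryr'm_3+xryn'z+xrm_2r'z+xnyr'z+m_1ryr'z\bigr)
\]
for $r,r'\in R_e$, $n,n'\in M_e$ and $m_1,m_2,m_3\in M_g$; in particular the first coordinate of every element of $(x,m_1)X_e(y,m_2)X_e(z,m_3)$ has the form $xryr'z$ with $r,r'\in R_e$, and the module parts of the outer factors never influence the first coordinate.

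For the forward implication, suppose $P\ltimes M$ is $g$-weakly $2$-absorbing. To see that $P$ is $g$-weakly $2$-absorbing, take $x,y,z\in R_g$ with $0\neq xR_eyR_ez\subseteq P$ and test the triple $(x,0),(y,0),(z,0)\in X_g$: by the display its product is contained in $P\ltimes M$ and is nonzero (its first coordinates run over $xR_eyR_ez$), so one of $(xy,0),(xz,0),(yz,0)$ lies in $P\ltimes M$, i.e.\ one of $xy,xz,yz$ lies in $P$. For the triple-zero condition, let $(x,y,z)$ be a $g$-triple-zero of $P$ and argue by contradiction, loading a homogeneous module element into one slot according to which of the three sets is assumed nonzero: test $(x,0),(y,0),(z,m)$ if $xR_eyR_eM_g\neq 0$; test $(x,w),(y,0),(z,0)$ if $M_gR_eyR_ez\neq 0$; and test $(x,0),(y,w),(z,0)$ if $xM_gz\neq 0$, where in the last case one uses $1\in R_e$ to realise $xwz$ as the relevant coordinate. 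In each case $xR_eyR_ez=0$ forces every resulting product to have first coordinate $0$, so the product lies in $P\ltimes M$ and is nonzero by the choice of $m$ or $w$; since $xy,yz,xz\notin P$, none of the three pairwise products $(x,\cdot)(y,\cdot)$, $(x,\cdot)(z,\cdot)$, $(y,\cdot)(z,\cdot)$ can lie in $P\ltimes M$, a contradiction.

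For the converse, suppose $P$ is $g$-weakly $2$-absorbing and the three vanishing conditions hold at every $g$-triple-zero. Given $(x,m_1),(y,m_2),(z,m_3)\in X_g$ with $0\neq(x,m_1)X_e(y,m_2)X_e(z,m_3)\subseteq P\ltimes M$, comparing first coordinates gives $xR_eyR_ez\subseteq P$. If $xR_eyR_ez\neq 0$, then $P$ being $g$-weakly $2$-absorbing puts one of $xy,yz,xz$ in $P$, and the matching pairwise product among the three pairs lies in $P\ltimes M$ because its first coordinate does. If $xR_eyR_ez=0$ and $(x,y,z)$ is not a $g$-triple-zero, then by definition one of $xy,yz,xz$ is already in $P$ and we finish the same way. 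The remaining case, $xR_eyR_ez=0$ with $(x,y,z)$ a $g$-triple-zero, is dispatched by using $xR_eyR_eM_g=M_gR_eyR_ez=xM_gz=0$ to kill each summand of the second coordinate in the display: $xryr'm_3\in xR_eyR_eM_g$; $xrm_2r'z\in xM_gz$ since $rm_2r'\in M_g$; $m_1ryr'z\in M_gR_eyR_ez$; and, inserting the unity, $xryn'z\in xR_eyR_eM_g$ because $n'z\in M_eR_g\subseteq M_g$, while $xnyr'z\in M_gR_eyR_ez$ because $xn\in R_gM_e\subseteq M_g$. Hence the product is $0$, contradicting its being nonzero, so this case does not arise and $P\ltimes M$ is $g$-weakly $2$-absorbing.

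I expect the bookkeeping in this last case to be the main obstacle: one must track the $G$-degrees carefully (for instance to see $n'z\in M_g$ and $xn\in M_g$) and use the unity of $R$ to fit each mixed monomial into exactly one of the three prescribed zero sets; the same degree checks also underlie the nonzero-ness assertions in the forward direction.
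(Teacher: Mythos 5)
Your proof is correct and follows essentially the same route as the paper's: in the forward direction you test triples with a homogeneous module element loaded into the appropriate slot to extract each of the three vanishing conditions at a $g$-triple-zero, and in the converse you split according to whether $xR_{e}yR_{e}z$ vanishes and whether $(x,y,z)$ is a $g$-triple-zero. Your explicit bookkeeping of the mixed monomials $xryn'z$ and $xnyr'z$ (absorbed into $xR_{e}yR_{e}M_{g}$ and $M_{g}R_{e}yR_{e}z$ by inserting the unity) is in fact slightly more careful than the paper's, which folds these terms into the containment $\left(xR_{e}yR_{e}z,\, M_{g}R_{e}yR_{e}z+xM_{g}z+xR_{e}yR_{e}M_{g}\right)$ without comment.
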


\begin{proof} Suppose that $P\ltimes M$ is a $g$-weakly $2$-absorbing ideal of $R\ltimes M$. Let $0\neq xR_{e}yR_{e}z\subseteq P$, where $x, y, z\in R_{g}$. Then $(0, 0)\neq(x, 0)R_{e}\ltimes M_{e}(y, 0)R_{e}\ltimes M_{e}(z, 0)\subseteq P\ltimes M$, and then $(xy, 0)\in P\ltimes M$ or $(xz, 0)\in P\ltimes M$ or $(yz, 0)\in P\ltimes M$. Hence, $xy\in P$ or $xz\in P$ or $yz\in P$. So, $P$ is $g$-weakly $2$-absorbing. Suppose that $(x, y, z)$ is a $g$-triple zero of $P$. Assume that $xR_{e}yR_{e}M_{g}\neq0$. Then there exist $r, s\in R_{e}$ and $m\in M_{g}$ such that $xrysm\neq0$, and then $(0, 0)\neq(xrysz, xrysm)=(x, 0)(r, 0)(y, 0)(s, 0)(z,m)\in (x, 0)R_{e}\ltimes M_{e}(y, 0)R_{e}\ltimes M_{e}(z,m)\subseteq xR_{e}yR_{e}z\ltimes M_{g}=0\ltimes M_{g}\subseteq P\ltimes M$. But $(x, 0)(y, 0)\notin P\ltimes M$ and $(x, 0)(z,m)\notin P\ltimes M$ and $(y, 0)(z,m)\notin P\ltimes M$, which contradicts the fact that $P\ltimes M$ is a $g$-weakly $2$-absorbing ideal. If $M_{g}R_{e}yR_{e}z\neq0$, then there exist $n\in M_{g}$ and $r, s\in R_{e}$ such that $nrysz\neq0$. As above, we have $(0, 0)\neq(xrysz, nrysz)=(x, n)(r, 0)(y, 0)(s, 0)(z, 0)\in(x, n)R_{e}\ltimes M_{e}(y, 0)R_{e}\ltimes M_{e}(z, 0)\subseteq xR_{e}yR_{e}z\ltimes M_{g}=0\ltimes M_{g}\subseteq P\ltimes M$. But $(x, n)(y, 0)\notin P\ltimes M$ and $(x, n)(z, 0)\notin P\ltimes M$ and $(y, 0)(z, 0)\notin P\ltimes M$ and again contradicting the fact that $P\ltimes M$ is a $g$-weakly $2$-absorbing ideal. If $xM_{g}z\neq0$, then there exists $t\in M_{g}$ such that $xtz\neq0$. Now, $(0, 0)\neq(xyz, xtz)=(x, 0)(1, 0)(y, t)(1, 0)(z, 0)\in(x, 0)R_{e}\ltimes M_{e}(y, t)R_{e}\ltimes M_{e}(z, 0)\subseteq xR_{e}yR_{e}z\ltimes M_{g}=0\ltimes M_{g}\subseteq P\ltimes M$. But $(x, 0)(y, t)\notin P\ltimes M$ and $(x, 0)(z, 0)\notin P\ltimes M$ and $(y, t)(z, 0)\notin P\ltimes M$ so contradicting the fact that $P\ltimes M$ is a $g$-weakly $2$-absorbing ideal. Conversely, suppose that $(0, 0)\neq(x, n)R_{e}\ltimes M_{e}(y,m)R_{e}\ltimes M_{e}(z, t)\subseteq P\ltimes M$ for $(x, n), (y,m), (z, t)\in R_{g}\ltimes  M_{g}$. Then $x, y, z\in R_{g}$ with $xR_{e}yR_{e}z\subseteq P$.

\underline{Case (1):} $xR_{e}yR_{e}z\neq0$. Since $P$ is $g$-weakly $2$-absorbing, $xy\in P$ or $xz\in P$ or $yz\in P$. Hence, $(x, n)(y,m)\in P\ltimes M$ or $(x, n)(z, t)\in P\ltimes M$ or $(y,m)(z, t)\in P\ltimes M$, as desired.

\underline{Case (2):} $xR_{e}yR_{e}z\neq0$. If $xy\notin P$ and $xz\notin P$ and $yz\notin P$, then $(x, y, z)$ is a $g$-triple zero of $P$ and by assumption $xR_{e}yR_{e}M_{g}=M_{g}R_{e}yR_{e}z=xM_{g}z=0$. Now, $(x, n)R_{e}\ltimes M_{e}(y, m)R_{e}\ltimes M_{e}(z, t)\subseteq\left(xR_{e}yR_{e}z, M_{g}R_{e}yR_{e}z+xM_{g}z+xR_{e}yR_{e}M_{g}\right)=(0, 0)$, a contradiction.
\end{proof}

\begin{defn} Let $R$ be a graded ring and $P$ be a proper graded ideal of $R$. Then P is said to be a graded strongly weakly $2$-absorbing ideal of $R$ if whenever $A, B$ and $C$ are graded ideals of $R$ such that $0\neq ABC\subseteq P$, then $AC\subseteq P$ or $BC\subseteq P$ or $AB\subseteq P$. 
\end{defn}

\begin{prop}\label{Proposition 6.9} Let $P$ be a proper graded ideal of $R$. Then $P$ is a graded strongly weakly $2$-absorbing ideal of $R$ if and only if for any graded ideals $A, B$ and $C$ of $R$ such that $P\subseteq A$, $0\neq ABC\subseteq P$ implies that $AB\subseteq P$ or $AC\subseteq P$ or $BC\subseteq P$. 
\end{prop}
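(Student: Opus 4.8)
The plan is to establish the two implications separately; the forward direction is immediate and the converse rests on a single standard trick, namely replacing $A$ by $A+P$.

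For the forward direction, suppose $P$ is a graded strongly weakly $2$-absorbing ideal of $R$. By definition this says that for \emph{all} triples of graded ideals $A, B, C$ of $R$ with $0\neq ABC\subseteq P$ we have $AB\subseteq P$ or $AC\subseteq P$ or $BC\subseteq P$. Restricting attention to those triples in which additionally $P\subseteq A$ gives exactly the asserted right-hand condition, so there is nothing further to prove here.

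For the converse, assume the right-hand condition and let $A, B, C$ be arbitrary graded ideals of $R$ with $0\neq ABC\subseteq P$. I would set $A'=A+P$. Then $A'$ is a graded ideal of $R$, being a sum of graded ideals, and $P\subseteq A'$. Moreover $A'BC=(A+P)BC=ABC+PBC\subseteq P$, since $PBC\subseteq P$ because $P$ is a two-sided ideal; and $A'BC\supseteq ABC\neq\{0\}$, so $0\neq A'BC\subseteq P$. Applying the hypothesis to the triple $A', B, C$ yields $A'B\subseteq P$ or $A'C\subseteq P$ or $BC\subseteq P$. Since $A\subseteq A'$, the first alternative gives $AB\subseteq A'B\subseteq P$ and the second gives $AC\subseteq A'C\subseteq P$, while the third is already in the required form; in every case one of $AB, AC, BC$ lies in $P$. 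Hence $P$ is graded strongly weakly $2$-absorbing.

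There is no genuine obstacle in this argument; the only points needing attention are the routine facts that $A+P$ is a graded ideal of $R$ and that enlarging $A$ to $A+P$ keeps the product inside $P$ (using that $P$ absorbs multiplication on both sides) without collapsing it to $\{0\}$, both of which are elementary.
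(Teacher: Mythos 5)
Your proof is correct and follows essentially the same route as the paper: the forward direction is a trivial restriction, and the converse uses the same substitution $A\mapsto A+P$ (the paper writes $A=K+P$), with your version merely spelling out the routine verifications that $A'BC\subseteq P$ and $A'BC\neq\{0\}$.
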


\begin{proof} If $P$ is a graded strongly weakly $2$-absorbing ideal of $R$, then the result holds by definition. Conversely, let $K, B$ and $C$ be graded ideals of $R$ such that $0\neq KBC\subseteq P$. Then $A=K+P$ is a graded ideal of $R$ such that $0\neq ABC\subseteq P$, and then by assumption, $AB\subseteq P$ or $AC\subseteq P$ or $BC\subseteq P$. Hence $KB\subseteq P$ or $KC\subseteq P$ or $BC\subseteq P$. So, $P$ is a graded strongly weakly $2$-absorbing ideal of $R$.
\end{proof}

\begin{prop}\label{Proposition 6.10} Let $R$ be a graded ring. Then every graded ideal of $R$ is graded strongly weakly $2$-absorbing if and only
if for any graded ideals $I, J$ and $K$ of $R$, $IJ=IJK$ or $IK=IJK$ or $JK=IJK$ or $IJK=0$.
\end{prop}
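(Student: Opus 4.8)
The plan is to prove both implications by directly unpacking the definition of a graded strongly weakly $2$-absorbing ideal, using only the elementary containments $IJK\subseteq IJ$, $IJK\subseteq IK$ and $IJK\subseteq JK$, valid for any ideals of $R$: indeed a product of (two-sided) ideals is again a two-sided ideal, so $(IJ)K\subseteq (IJ)R\subseteq IJ$, while $IJK\subseteq IRK\subseteq IK$ since $I$ is a right ideal, and $IJK\subseteq RJK\subseteq JK$ since $K$... rather since $J$ is a left ideal. I will also use the standard fact that a product of graded ideals is a graded ideal, so that the hypotheses actually apply to the objects we feed them.

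For the \emph{only if} direction, I take arbitrary graded ideals $I,J,K$ of $R$, assume $IJK\neq\{0\}$, and aim for one of $IJ=IJK$, $IK=IJK$, $JK=IJK$. Put $P:=IJK$, a graded ideal. If $P=R$, then $IJ\supseteq IJK=R$ forces $IJ=R=IJK$ and we are done; so assume $P$ is proper. By hypothesis $P$ is graded strongly weakly $2$-absorbing, and since $0\neq IJK\subseteq P$ I apply the definition with $A=I$, $B=J$, $C=K$ to get $IJ\subseteq P$ or $IK\subseteq P$ or $JK\subseteq P$. In the first case $IJ\subseteq P=IJK\subseteq IJ$ gives $IJ=IJK$; similarly $IK\subseteq P$ yields $IK=IJK$ and $JK\subseteq P$ yields $JK=IJK$. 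This settles the forward implication.

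For the \emph{if} direction, let $P$ be any proper graded ideal of $R$ and suppose $A,B,C$ are graded ideals with $0\neq ABC\subseteq P$; I must produce one of $AB\subseteq P$, $AC\subseteq P$, $BC\subseteq P$. Applying the hypothesis to $I=A$, $J=B$, $K=C$ and using $ABC\neq\{0\}$, one of $AB=ABC$, $AC=ABC$, $BC=ABC$ holds, and in each case the displayed left-hand side equals $ABC\subseteq P$. Hence $P$ is graded strongly weakly $2$-absorbing; since $P$ was an arbitrary proper graded ideal, every graded ideal of $R$ is graded strongly weakly $2$-absorbing.

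The argument is essentially bookkeeping, so I do not anticipate a genuine obstacle; the only point needing a little care is the degenerate case $IJK=R$ in the forward direction—where ``graded strongly weakly $2$-absorbing'' is not even defined, being a notion reserved for proper ideals—which is dispatched at once by the containment $IJK\subseteq IJ$. It is also worth checking explicitly that $IJK$, $AB$, etc.\ are honest graded ideals, so that Proposition~\ref{Proposition 6.9} and the relevant definitions genuinely apply.
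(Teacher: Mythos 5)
Your proof is correct and follows essentially the same route as the paper: in the forward direction apply the hypothesis to the (graded, proper unless it equals $R$) ideal $P=IJK$ itself and combine with the automatic containments $IJK\subseteq IJ$, $IJK\subseteq IK$, $IJK\subseteq JK$; in the converse direction the hypothesis immediately hands $P$ one of $AB$, $AC$, $BC$ as a subset. Your treatment of the degenerate case $IJK=R$ (via $R=IJK\subseteq IJ$) is in fact a little cleaner than the paper's remark that $I=J=K=R$ forces $R=R^{3}$.
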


\begin{proof} Suppose that every graded ideal of $R$ is graded strongly weakly $2$-absorbing. Let $I, J$ and $K$ be graded ideals of $R$. If $IJK\neq R$, then $IJK$ is graded strongly weakly $2$-absorbing. Suppose that $IJK\neq0$. Then $0\neq IJK\subseteq IJK$ and $IJ\subseteq IJK$ or $IK\subseteq IJK$ or $JK\subseteq IJK$ and hence $IJ=IJK$ or $IK=IJK$ or $JK=IJK$. If $IJK=R$, then $I=J=K=R$. Therefore, $R=R^{3}$. Conversely, let $P$ be a proper graded ideal of $R$, $0\neq IJK\subseteq P$ for some graded ideals $I, J$ and $K$ of $R$. Then $IJ=IJK\subseteq P$ or $IK=IJK\subseteq P$ or $JK=IJK\subseteq P$. Hence, $P$ is a
graded strongly weakly $2$-absorbing ideal of $R$. 
\end{proof}

\begin{cor}\label{Corollary 6.11} Let $R$ be a graded ring such that every graded ideal of $R$ is graded strongly weakly $2$-absorbing. Then for any graded ideal $I$ of $R$, $I^{3}=I^{2}$ or $I^{3}=0$. 
\end{cor}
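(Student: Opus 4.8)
The plan is to obtain this immediately from Proposition \ref{Proposition 6.10} by specializing the three graded ideals occurring there to a single ideal. By hypothesis every graded ideal of $R$ is graded strongly weakly $2$-absorbing, so Proposition \ref{Proposition 6.10} applies and tells us that for \emph{any} triple of graded ideals $I, J, K$ of $R$ one has $IJ = IJK$ or $IK = IJK$ or $JK = IJK$ or $IJK = 0$.

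I would then fix an arbitrary graded ideal $I$ of $R$ and invoke this with $J = K = I$. This is legitimate, since the condition in Proposition \ref{Proposition 6.10} is quantified over all triples of graded ideals with no distinctness requirement, and $I^{2}$, $I^{3}$ are again graded ideals (products of graded ideals are graded). Each of the products $IJ$, $IK$, $JK$ then collapses to $I^{2}$, while $IJK$ becomes $I^{3}$, so the four-fold disjunction reads $I^{2} = I^{3}$ or $I^{2} = I^{3}$ or $I^{2} = I^{3}$ or $I^{3} = 0$, which is precisely the assertion $I^{3} = I^{2}$ or $I^{3} = 0$.

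There is essentially no obstacle here: the entire content sits in Proposition \ref{Proposition 6.10}, and the corollary is just the diagonal case $I = J = K$ of that result. If one preferred a self-contained argument avoiding the reformulation, one could argue directly: if $I^{3} \neq 0$, then either $I^{3} = R$, forcing $I = R$ (since $I^{3} \subseteq I$) and hence $I^{3} = I^{2} = R$; or $I^{3}$ is a proper graded ideal, hence graded strongly weakly $2$-absorbing, and applying the defining property to $0 \neq I\cdot I\cdot I \subseteq I^{3}$ yields $I^{2} \subseteq I^{3}$, which together with the trivial inclusion $I^{3} \subseteq I^{2}$ (valid since $R$ has unity, so $I^{2}I \subseteq I^{2}R = I^{2}$) gives $I^{3} = I^{2}$. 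Either route completes the proof.
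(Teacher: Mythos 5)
Your proposal is correct and is exactly the paper's intended derivation: the corollary is stated without proof as an immediate consequence of Proposition \ref{Proposition 6.10}, obtained by taking $I=J=K$ there, just as you do. Your alternative self-contained argument is also sound (and correctly handles the case $I^{3}=R$ separately, where the strongly weakly $2$-absorbing hypothesis, being restricted to proper ideals, does not apply).
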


\begin{question} As a proposal for future work, we think it will be worthy to study non-commutative graded rings in which every graded ideal is graded weakly $2$-absorbing or graded strongly weakly $2$-absorbing. What the kind of the results that will be achieved? 
\end{question}

\end{document}